\newcommand{\ZZ}{\mathbb Z}
\newcommand{\PP}{\mathbb P}
\newcommand{\QQ}{\mathbb Q}
\newcommand{\CC}{\mathbb C}
\newcommand{\mcB}{\mathcal B}
\newcommand{\mcC}{\mathcal C}
\newcommand{\mcQ}{\mathcal Q}
\newcommand{\bAlex}{\mathbf{Alex}}
\newcommand{\bCov}{\mathop {\rm \underline{Cov}}\nolimits}
\newcommand{\bde}{\boldsymbol {e}}
\newcommand{\Cov}{\mathop {\rm Cov}\nolimits}
\newcommand{\MW}{\mathop {\rm MW}\nolimits}
\newcommand{\Gal}{\mathop {\rm Gal}\nolimits}
\newcommand{\NS}{\mathop {\rm NS}\nolimits}
\newcommand{\rank}{\mathop {\rm rank}\nolimits}
\newcommand{\Red}{\mathop {\rm Red}\nolimits}
\newcommand{\Supp}{\mathop {\rm Supp}\nolimits}
\newcommand{\Sing}{\mathop {\rm Sing}\nolimits}
\newtheorem{thm}{Theorem}[section]
\newtheorem{cor}{Corollary}[section]
\newtheorem{prop}{Proposition}[section]
\newtheorem{lem}{Lemma}[section]
\newtheorem{defin}{Definition}[section]
\newtheorem{exmple}{Example}[section]
\newtheorem{rem}{Remark}[section]
\newtheorem{qz}{Question}[section]
\newcommand{\I}{\mathop {\rm I}\nolimits}
\newcommand{\III}{\mathop {\rm III}\nolimits}
\newcommand{\qed}{\hfill $\Box$}
\newcommand{\proof}{\noindent{\textsl {Proof}.}\hskip 3pt}
\renewcommand{\thesubparagraph}{\theparagraph.\@arabic\c@subparagraph}
\begin{document}
  
  \begin{center}
  
 {\bf  \Large 
On the topology of the complements of\\ reducible 
plane curves via Galois covers}
\bigskip

\bigskip
\large 
Shinzo BANNAI,
Masayuki KAWASHIMA
and
Hiro-o TOKUNAGA


\end{center}
\normalsize

\begin{abstract}
Let $\mcB$ be a reducible reduced plane curve. 
We introduce a new point of view to study the topology of $(\PP^2, \mcB)$ via Galois covers and 
Alexander polynomials. We show  its effectiveness through examples of Zariski $N$-plets for 
conic and  conic-quartic configurations.

\end{abstract}

{\large \bf Introduction}

 Let $\mcB$ be a reduced plane curve in the projective plane $\PP^2 (= \PP^2(\CC))$. Since
 Zariski's famous article \cite{zariski29}, the topology of the complement $\PP^{2}\setminus \mcB$ and the pair $(\PP^{2}, \mcB)$
 have been studied through various points of view by many mathematicians. Among them, there is an 
 approach via Galois covers of $\PP^{2}$ whose branch loci are contained in $\mcB$. In this article,
 we study the topology of $(\PP^{2}, \mcB)$ along this line. In previous articles by the third author
 (\cite{tokunaga94, tokunaga97, tokunaga98, tokunaga99, tokunaga04, tokunaga12}), 
 the  existence and non-existence problem of a Galois cover with fixed Galois group, fixed branched curve $\mcB$ 
 and fixed ramification type
  was considered and  results were applied  to study the topology of $(\PP^{2}, \mcB)$. In this article, however,
 we consider  various Galois covers with fixed Galois group, various branch curves contained 
 in $\mcB$ and various ramification types at the same time. This approach is first taken in \cite{bannai-tokunaga}.
 The  purpose of this article is to push forward the above idea and to study the topology of $(\PP^{2}, \mcB)$.
As a result, we obtain some new examples of  Zariski $N$-plets for conic-quartic configurations.

In order to explain our idea more precisely let us define some terminology.
 For normal projective varieties $X$ and $Y$ with
a finite morphism $\pi : X \to Y$, we say that $X$ is a Galois cover of $Y$ if the induced
field extension $\CC(X)/\CC(Y)$ is Galois, where $\CC(X)$ (resp. $\CC(Y)$) denotes the rational function 
field of $X$ (resp. $Y$). Under this circumstance,
 the Galois group $\Gal(\CC(X)/\CC(Y))$ acts on $X$ in a way such that $Y$ is the quotient
space with respect to this action ({\it cf.} \cite[\S 1]{tokunaga97}).
 If the Galois group 
$\Gal(\CC(X)/\CC(Y))$ is isomorphic to a finite group $G$,  $X$ is simply called a $G$-cover of $Y$.
The branch locus of $\pi : X \to Y$,  denoted by $\Delta_{\pi}$ or $\Delta(X/Y)$,
is a subset of $Y$ consisting of points $y$ of $Y$ such that $\pi$ is not locally
isomorphic over $y$. It is well-known that $\Delta_{\pi}$ is an algebraic subset of pure codimension $1$
if $Y$ is smooth (\cite{zariski}).  

 Now assume that $Y$ is smooth. Let $\mcB$ be a reduced divisor on $Y$ and denote its irreducible
 decomposition by $\mcB = \sum_{i=1}^r\mcB_i$.  
 A $G$-cover $\pi : X \to Y$ is said to be branched
 at $\sum_{i=1}^r e_i\mcB_i$ if $(i)$ $\Delta_{\pi} = \mcB$ (here we identify $\mcB$ with its support) and $(ii)$ the ramification index
 along $\mcB_i$ is $e_i \ge 2$ for each $i$, where  the ramification index means the
 one along the smooth part of $\mcB_i$ for each $i$. 
  
  Our approach via $G$-covers is based the following proposition which follows from
 the Grauert-Remmert theorem:
 
 \begin{prop}\label{prop:fund}{(\cite[Proposition 3.6]{act}) 
Under the notation as above, let $\gamma_i$ be a meridian around $\mcB_i$, and
$[\gamma_i]$ denote its class in the topological fundamental group 
$\pi_1(Y\setminus \mcB, p_o)$.
If there exists a $G$-cover $\pi : X \to Y$ branched at $e_1\mcB_1 + \cdots + e_r\mcB_r$, 
then there exists a normal subgroup $H_{\pi}$ of $\pi_1(Y\setminus \mcB, p_o)$ 
such that:

\begin{enumerate}
\item[(i)] $[\gamma_i]^{e_i} \in H_{\pi},  [\gamma_i]^k \not\in H_{\pi}, (1 \le k \le e_i -1)$, and
\item[(ii)]  $\pi_1(Y\setminus \mcB, p_o)/H_{\pi} \cong G$.
\end{enumerate}

Conversely, if there exists a normal subgroup $H$ of $\pi_1(Y\setminus \mcB, p_o)$ satisfying the above two conditions for $H_{\pi}$, then there exists a $G$-cover
$\pi_H : X_H \to Y$ branched at $e_1\mcB_1 + \cdots + e_r\mcB_r$.
}
\end{prop}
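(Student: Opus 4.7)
My plan is to deduce the proposition from the standard correspondence between finite Galois étale covers of the open complement $U := Y \setminus \mcB$ and normal finite-index subgroups of $\pi_1(U, p_o)$, combined with the Grauert--Remmert extension theorem, which ensures that such étale covers extend uniquely to normal ramified covers of $Y$ via normalization in the function field.

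For the forward direction, given a $G$-cover $\pi: X \to Y$ branched at $\sum_i e_i \mcB_i$, set $U^* := \pi^{-1}(U)$. Then $\pi|_{U^*}: U^* \to U$ is a finite unramified Galois cover with group $G$, so classical covering space theory produces a normal subgroup $H_\pi$ of $\pi_1(U, p_o)$ with quotient isomorphic to $G$, yielding (ii). For (i), I would work in a small polydisc neighborhood around a smooth point of $\mcB_i$ where $\mcB_i$ is locally defined by $z_1 = 0$; there the hypothesis that the ramification index along $\mcB_i$ equals $e_i$ means that on each connected component of the preimage, $\pi$ is analytically equivalent to $z_1 \mapsto z_1^{e_i}$. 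Consequently the meridian $\gamma_i$ lifts to a closed loop in $U^*$ exactly when raised to a multiple of $e_i$, which translates directly into $[\gamma_i]^{e_i} \in H_\pi$ and $[\gamma_i]^k \not\in H_\pi$ for $1 \le k \le e_i - 1$.

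For the converse, starting from a normal subgroup $H$ satisfying (i) and (ii), covering space theory yields a finite unramified Galois cover $V_H \to U$ with group $G$ corresponding to $H$. I would then define $X_H$ as the normalization of $Y$ in $\CC(V_H)$; by the Grauert--Remmert theorem, $X_H$ is a normal projective variety and the induced finite morphism $\pi_H: X_H \to Y$ remains a $G$-cover extending $V_H \to U$. Reversing the local analysis above, the ramification index of $\pi_H$ along $\mcB_i$ coincides with the order of the image of $[\gamma_i]$ in $\pi_1(U, p_o)/H \cong G$, which by (i) equals exactly $e_i \ge 2$. Since all $e_i \ge 2$, no component drops out of the branch locus, so $\Delta_{\pi_H} = \mcB$.

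The main technical point is the clean identification of the ramification index along $\mcB_i$ with the order of the meridian class $[\gamma_i]$ in the Galois quotient. This rests on the local model near a smooth point of $\mcB_i$, where any finite normal cover that is étale on the complement of $\{z_1 = 0\}$ decomposes as a disjoint union of cyclic covers of the form $z_1 \mapsto z_1^d$, whose degrees faithfully record the local monodromy around $\gamma_i$. Once this local-to-global matching is in place, conditions (i) and (ii) translate transparently between the two sides of the correspondence.
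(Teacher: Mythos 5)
Your argument is correct and follows exactly the route the paper indicates: the paper itself gives no proof of this proposition, merely citing \cite[Proposition 3.6]{act} and remarking that it ``follows from the Grauert--Remmert theorem,'' which is precisely the correspondence you spell out (restriction to the \'etale cover of $Y\setminus\mcB$ and classical covering space theory for the subgroup $H_\pi$, the local model $z_1\mapsto z_1^{e_i}$ near a smooth point of $\mcB_i$ to identify the ramification index with the order of the meridian class in $G$, and normalization of $Y$ in $\CC(V_H)$ together with purity of the branch locus for the converse). No gaps; this is the standard proof the cited reference supplies.
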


For $G$-covers $\pi_1 : X_1 \to Y$ and $\pi_2 : X_2 \to Y$, we identify them if there exists an isomorphism
$\Phi : X_1 \to X_2$ such that $\pi_1 = \pi_2\circ\Phi$.  
Under the same notation as in Proposition~\ref{prop:fund}, we introduce some additional 
notation and terminology. 

\begin{itemize}

\item For $\bde = (e_1, \ldots, e_r)$, a vector in $\ZZ^{\oplus r}_{\ge 1}$,
we say that a $G$-cover
 $\pi: X \to Y$ is branched at most along $\mcB$ of type $\bde$ if $(i)$
 $\Delta_{\pi}
= \Supp(\sum_i(e_i-1)\mcB_i)$ and $(ii)$ the ramification index along $\mcB_i$ is $e_i$.
Note that $\mcB_i$ is {\it not contained} in $\Delta_{\pi}$ if $e_i = 1$.

%


\item $\Cov_b(Y,\mcB(\bde), G)  :=$
the set of isomorphism classes of $G$-covers $\pi : X \to Y$ branched at most along $\mcB$ of type 
$\bde$.

\item $\Cov_b(Y, e_{i_1}B_{i_1} + \ldots + e_{i_s}B_{i_s}, G)  :=$
the set of isomorphism classes of $G$-covers $\pi : X \to Y$ branched at 
$e_{i_1}B_{i_1} + \cdots + e_{i_s}B_{i_s}$. Note that $\Cov_b(Y, e_{i_1}B_{i_1} + \ldots + e_{i_s}B_{i_s}, G)=\Cov_b(Y,\mcB(\bde^\prime), G)$ for $\bde' =(e^\prime_1,\ldots,e^\prime_r)$ where $e^\prime_j=1$ if $j\not\in\{i_1,\ldots,i_s\}$ and $e^\prime_j=e_j$ if $j\in\{i_1,\ldots,i_s\}$. 

\item $\bCov(Y,\mcB,G):=\bigcup_{\bde\in\ZZ_{\ge 1}^{\oplus r}} \Cov_b(Y,\mcB(\bde), G)$. This is the set of $G$-covers $\pi: X\to Y$ branched at most along $\mcB$ of \emph {all types}.


\end{itemize}
Sometimes, we omit $Y$ and/or  $G$ if they are clear from the context. 
Note again that  $\Cov_b(Y, \mcB(\bde), G)$ for fixed $\bde$ was considered in the third author's previous work. In this paper, we move a step further and consider  $\bCov(Y, \mcB, G)$ which will give us more information about $(Y, \mcB)$. We will consider $\bCov(Y, \mcB, G)$ with the additional information of the set of subsets $\{\Cov_b(Y,\mcB(\bde),G)\mid 
\bde\in\ZZ^{\oplus r}_{\geq 1}\}$ and denote it as 
\[{\bf Cov}(Y,\mcB,G):=\left(\bCov(Y, \mcB, G), \{\Cov_b(Y,\mcB(\bde),G) \mid  \bde\in \ZZ_{\geq 1}^{\oplus r}\}\right).\]
An equivalence relation can be defined on the set $\{{\bf Cov}(Y,\mcB,G)\mid \mcB:\text{ reduced divisor on $Y$ }\}$ as follows:
\begin{defin} 
Let $\mcB^k$ ($k=1,2$) be reduced divisors on $Y$.  Let $\mcB^k=\mcB^k_1+\cdots+\mcB^k_r$ be their irreducible decompositions.
 If there exists a bijection $f:\{\mcB^1_{1},\ldots,\mcB^1_{r}\}\rightarrow\{\mcB^2_{1},\ldots,\mcB^2_{r}\}$  and a bijection $g: \bCov(Y, \mcB^1, G)\rightarrow\bCov(Y,\mcB^2,G)$ such that $g(\Cov_b(Y, e_{i_1}B_{i_1}^1 + \ldots + e_{i_s}B_{i_s}^1 ,G))=\Cov_b(Y,e_{i_1}f(B_{i_1}^1) + \ldots + e_{i_s}f(B_{i_s}^1),G)$ for all $e_{i_1}B_{i_1}^1 + \ldots + e_{i_s}B_{i_s}^1 $, we say that ${\bf Cov}(Y, \mcB^1, G)$ and ${\bf Cov}(Y, \mcB^2, G)$ are equivalent and denote this by ${\bf Cov}(Y, \mcB^1, G)\approx{\bf Cov}(Y, \mcB^2, G)$.
\end{defin}
It can be easily checked that the relation $\approx$ is an equivalence relation. When we want to emphasize the bijection $f$ among the components, we use the symbol $\underset{f}{\approx}$. The following proposition is immediate from Proposition~\ref{prop:fund}:

\begin{prop}\label{prop:1-1}{
Let $\mcB^k$ ($k =1, 2$) be reduced divisors on $Y$. Let $\mcB^k=\mcB^k_1+\cdots+\mcB^k_r$ be their irreducible decompositions.
If there exists a homeomorphism $h: (Y, \mcB^1)\rightarrow(Y, \mcB^2)$, $h$ induces a bijection $\eta:\{\mcB^1_{1},\ldots,\mcB^1_{r}\}\rightarrow\{\mcB^2_{1},\ldots,\mcB^2_{r}\}$ and 
bijections  $h_\ast^{\bde} :\Cov_b(Y, e_{i_1}B_{i_1}^1 + \ldots + e_{i_s}B_{i_s}^1, G)\rightarrow\Cov_b(Y, e_{i_1}\eta(B_{i_1}^1) + \ldots + e_{i_s}\eta(B_{i_s}^1), G)$ for all $\bde\in\ZZ_{\geq1}^{\oplus r}$. 
In particular,  $h$ induces an equivalence ${\bf Cov}(Y,\mcB^1,G)\underset{\eta}{\approx}{\bf Cov}(Y, \mcB^2,G)$.}
\end{prop}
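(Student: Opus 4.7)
The plan is to use Proposition~\ref{prop:fund} as a translation device: it puts $G$-covers branched along $\mcB^k$ in bijective correspondence with certain normal subgroups of $\pi_1(Y\setminus\mcB^k,p_o)$, and I would transport these subgroups through the isomorphism induced by the homeomorphism $h$.

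First I would restrict $h$ to obtain a homeomorphism $h_0\colon Y\setminus\mcB^1\to Y\setminus\mcB^2$ and a homeomorphism $\mcB^1\to\mcB^2$. Since the irreducible components of a reduced plane curve admit a purely topological characterization---for instance, as the closures of the connected components of $\mcB^k\setminus\Sing(\mcB^k)$---the second restriction permutes them, producing $\eta$. In particular, both divisors share the same number of components $r$, so the matching is well-posed.

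Next, I would analyze how the induced isomorphism $h_*\colon\pi_1(Y\setminus\mcB^1,p_o)\to\pi_1(Y\setminus\mcB^2,h(p_o))$ acts on meridians. Since a meridian around $\mcB^1_i$ is represented, up to conjugation, by a small loop encircling a smooth point of $\mcB^1_i$, and $h$ carries such a loop to an analogous loop encircling a smooth point of $\eta(\mcB^1_i)$, the class $h_*[\gamma^1_i]$ lies in the conjugacy class of a meridian around $\eta(\mcB^1_i)$. Conditions (i) and (ii) of Proposition~\ref{prop:fund}, being stated in terms of powers of meridian classes belonging to $H$ and the quotient group structure, therefore transfer from a normal subgroup $H\triangleleft\pi_1(Y\setminus\mcB^1,p_o)$ to its image $h_*(H)$ with the same exponents $e_i$ but with $\eta(\mcB^1_i)$ replacing $\mcB^1_i$. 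Applying Proposition~\ref{prop:fund} in both directions, the assignment $H\mapsto h_*(H)$ thus yields, for each $\bde$, the desired bijection
\[
h_*^{\bde}\colon\Cov_b(Y, e_{i_1}\mcB^1_{i_1}+\cdots+e_{i_s}\mcB^1_{i_s}, G)\to\Cov_b(Y, e_{i_1}\eta(\mcB^1_{i_1})+\cdots+e_{i_s}\eta(\mcB^1_{i_s}), G),
\]
and taking unions over $\bde$ gives the equivalence ${\bf Cov}(Y,\mcB^1,G)\underset{\eta}{\approx}{\bf Cov}(Y,\mcB^2,G)$.

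The main obstacle is to pin down the meridional behavior of $h_*$ cleanly: a homeomorphism of pairs need not preserve any transverse or smooth structure, so the statement ``$h_*$ sends meridians to meridians'' must be justified purely topologically. The natural argument is to observe that the conjugacy class of meridians around $\mcB^k_i$ is intrinsically determined by the pair $(Y,\mcB^k)$ together with the component $\mcB^k_i$---characterized, for example, as the image in $\pi_1(Y\setminus\mcB^k,p_o)$ of a generator of the local fundamental group of a punctured neighborhood of a smooth point of $\mcB^k_i$---whence it is preserved under any homeomorphism of pairs.
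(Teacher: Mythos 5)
Your proposal is correct and follows the same route the paper intends: the paper offers no separate argument for this proposition, declaring it ``immediate from Proposition~\ref{prop:fund}'', and your write-up simply makes explicit the standard translation (components are topologically intrinsic, $h_*$ carries meridian conjugacy classes to meridian conjugacy classes up to inversion, and normal subgroups satisfying conditions (i)--(ii) are transported by $h_*$). The only points worth noting are minor and harmless: a basepoint change and a possible orientation reversal only affect meridians by conjugation and inversion, neither of which disturbs conditions (i) and (ii) for a normal subgroup.
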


This set up can be interpreted in the following way. For each $I=\{i_1,\ldots,i_n\}\subset\{1,\ldots,r\}$, let $\mcB_{I}=\mcB_{i_1}+\cdots+\mcB_{i_n}$. Furthermore let ${\rm Sub}(\mcB):=\{\mcB_I|I\subset\{1,\ldots,r\}\}$ be the set of sub-configurations of $\mcB$. A homeomorphism $h$ as in Proposition \ref{prop:1-1} induces a inclusion preserving bijection $\eta: {\rm Sub}(\mcB^1)\rightarrow{\rm Sub}(\mcB^2)$, i.e.  if $\mathcal{B}_{I^\prime}^1\subset\mathcal{B}_I^1$ then $\eta(\mcB_{I^\prime}^1)\subset\eta(\mcB_{I}^1)$. Furthermore, ${\bf Cov}(Y,\bullet, G)$ can be viewed as a map from ${\rm Sub}(B^k)$ to $\{{\bf Cov}(Y,\mcB,G)\mid \mcB:\text{ reduced divisor on $Y$ }\}$. Then we have the following commutative diagram:
\[
\begin{diagram}
\node{{\rm Sub}(\mcB^1)}\arrow{se,t}{{\bf Cov}(Y,\bullet,G)}\arrow{s,l}{\eta} \\
\node{{\rm Sub}(\mcB^2)}\arrow{e,b}{ {\bf Cov}(Y,\bullet,G)}\node{\{{\bf Cov}(Y,\mcB,G) \mid  \mcB:\text{ reduced divisor on $Y$ } \}} 
\end{diagram}
\]

Summing up  we have:

\begin{cor}\label{key-cor1}
Under the above setting ${\bf Cov}(Y,\mcB_I,G)\underset{\eta}{\approx}{\bf Cov}(Y, \eta(\mcB_I),G)$ for all $\mcB_I\in{\rm Sub}(\mathcal{B}^1)$. Conversely, if such $\eta$ compatible with ${\bf Cov}(Y,\bullet,G)$  in this sense does not exist, $(\PP^2,\mcB^1)$ and $(\PP^2,\mcB^2)$ are not homeomorphic. 
\end{cor}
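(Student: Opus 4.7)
The statement essentially packages Proposition~\ref{prop:1-1} across all sub-configurations simultaneously, so the argument is almost entirely a matter of unwinding definitions. The plan is: (i) for the first assertion, restrict the given homeomorphism to each sub-configuration and invoke Proposition~\ref{prop:1-1}; (ii) for the converse, take the contrapositive of (i).

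For (i), fix a sub-configuration $\mcB_I^1 = \mcB_{i_1}^1 + \cdots + \mcB_{i_n}^1 \in {\rm Sub}(\mcB^1)$. Since the hypothesized homeomorphism $h:(Y,\mcB^1)\to(Y,\mcB^2)$ is a self-homeomorphism of $Y$ carrying each irreducible component $\mcB_j^1$ onto $\eta(\mcB_j^1)$, it sends $\Supp(\mcB_I^1)$ exactly onto $\Supp(\eta(\mcB_I^1))$. In particular, $h$ also sends the complement $Y\setminus \mcB_I^1$ homeomorphically onto $Y\setminus \eta(\mcB_I^1)$, so $h$ restricts to a homeomorphism of pairs $(Y,\mcB_I^1)\to(Y,\eta(\mcB_I^1))$. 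Applying Proposition~\ref{prop:1-1} to this restricted homeomorphism (with $\eta$ replaced by its restriction to the components of $\mcB_I^1$) produces, for every $\bde$, a bijection $h^{\bde}_{\ast}:\Cov_b(Y,\mcB_I^1(\bde),G)\to\Cov_b(Y,\eta(\mcB_I^1)(\bde),G)$. Taking the union of these bijections over all $\bde$ yields a bijection on $\bCov$ that respects the distinguished subsets, which is exactly the equivalence ${\bf Cov}(Y,\mcB_I^1,G)\underset{\eta}{\approx}{\bf Cov}(Y,\eta(\mcB_I^1),G)$.

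For (ii), suppose there were a homeomorphism $h:(\PP^2,\mcB^1)\to(\PP^2,\mcB^2)$. Proposition~\ref{prop:1-1} then provides a bijection $\eta$ on components, and the first assertion applied to every $\mcB_I^1\in{\rm Sub}(\mcB^1)$ shows that this $\eta$ is automatically compatible with ${\bf Cov}(Y,\bullet,G)$ in the sense of the commutative diagram preceding the corollary. Hence if no such $\eta$ exists, there can be no homeomorphism between the pairs.

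There is no real obstacle here; the only point that needs even a moment's thought is to verify that $h$ restricts to a homeomorphism of pairs on each sub-configuration, and this is immediate from the fact that $\eta$ acts component-by-component. The genuine content of the corollary lies not in its proof but in its practical use: it converts the apparently intractable task of ruling out a homeomorphism of pairs into the concrete combinatorial task of ruling out a bijection $\eta$ between component sets that matches up the families $\Cov_b(Y,\mcB_I(\bde),G)$ sub-configuration by sub-configuration.
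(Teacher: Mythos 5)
Your proof is correct and follows exactly the route the paper intends: the corollary is stated with only the phrase ``Summing up'' because it is precisely the observation that $h$ restricts to a homeomorphism of pairs on each sub-configuration, so Proposition~\ref{prop:1-1} applies componentwise and the converse is the contrapositive. Nothing is missing.
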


We can replace  ${\bf Cov}(Y,\bullet,G)$  with various different topological invariants and conduct a similar discussion. In particular, we will  also study the case of Alexander polynomials. 
Let  $Y=\PP^2$ and $\mcB$ be a reduced plane curve. Let $\Delta_\mcB(t)$ be the Alexander polynomial of $\mcB$ (see Section \ref{alex} for the definition) and  let 
$\bAlex_{\mcB}: {\rm Sub}(\mcB)\rightarrow\CC[t]$ be defined by $\bAlex_{\mcB}(\mathcal{B}_I)=\Delta_{\mcB_I}(t)$. Now, if a homeomorphism $h: (\PP^2,\mcB^1)\rightarrow(\PP^2,\mcB^2)$ exists, it induces a bijection $\eta$ as before such that is compatible with $\bAlex_{\mcB^k}$. 

\begin{cor}\label{key-cor2}
Under the above setting $\bAlex_{\mcB^1}=\bAlex_{\mcB^2}\circ\eta$. Conversely if such $\eta$ does not exist, $(\PP^2,\mcB^1)$ and $(\PP^2,\mcB^2)$ are not homeomorphic. 
\end{cor}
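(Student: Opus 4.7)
The plan is to follow the template of Corollary~\ref{key-cor1}, replacing the collection of $G$-covers with the Alexander polynomial. The starting point is Proposition~\ref{prop:1-1}: a homeomorphism $h\colon(\PP^2,\mcB^1)\to(\PP^2,\mcB^2)$ permutes the irreducible components because these components are topologically intrinsic---they are the closures of the connected components of $\mcB^k\setminus\Sing(\mcB^k)$, and $\Sing(\mcB^k)$ is the locus at which $\mcB^k$ fails to be a topological $2$-manifold. This yields the bijection $\eta$ on components and, by taking unions, the inclusion-preserving bijection $\eta\colon{\rm Sub}(\mcB^1)\to{\rm Sub}(\mcB^2)$ appearing in the commutative diagram above.

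Next, for each $\mcB_I^1\in{\rm Sub}(\mcB^1)$, the underlying homeomorphism $h\colon\PP^2\to\PP^2$ satisfies $h(\mcB_I^1)=\bigcup_{i\in I}h(\mcB_i^1)=\bigcup_{i\in I}\eta(\mcB_i^1)=\eta(\mcB_I^1)$ as subsets of $\PP^2$, so $h$ restricts to a homeomorphism of pairs $(\PP^2,\mcB_I^1)\to(\PP^2,\eta(\mcB_I^1))$. Since $\Delta_{\mcC}(t)$ is a topological invariant of the pair $(\PP^2,\mcC)$---it is built from the infinite cyclic cover of $\PP^2\setminus\mcC$ associated with the total linking-number homomorphism $\pi_1(\PP^2\setminus\mcC)\to\ZZ$, both of which depend only on the homeomorphism type of the pair---we obtain
\[
\bAlex_{\mcB^1}(\mcB_I^1)=\Delta_{\mcB_I^1}(t)=\Delta_{\eta(\mcB_I^1)}(t)=\bAlex_{\mcB^2}(\eta(\mcB_I^1)),
\]
which is exactly $\bAlex_{\mcB^1}=\bAlex_{\mcB^2}\circ\eta$. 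The converse assertion is its contrapositive.

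The only points that require care are (i)~that $h$ actually carries each sub-configuration $\mcB_I^1$ onto $\eta(\mcB_I^1)$ setwise, which is automatic once one knows that $\eta$ is induced by $h$ itself on components, and (ii)~the topological invariance of $\Delta_{\mcC}(t)$, which is classical and will be recalled in Section~\ref{alex}. Neither is a serious obstacle; the real content of the corollary is the observation that the assignment $\mcB_I\mapsto\Delta_{\mcB_I}(t)$ should be tested against $\eta$ over the \emph{entire} sub-configuration lattice, yielding a finer invariant than $\Delta_{\mcB}(t)$ alone.
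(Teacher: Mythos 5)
Your proposal is correct and is essentially the argument the paper intends (the paper treats the corollary as immediate from Proposition~\ref{prop:1-1} and the topological invariance of $\Delta_{\mcC}(t)$, and does not write out a proof): the homeomorphism permutes irreducible components, hence restricts to a homeomorphism of pairs $(\PP^2,\mcB_I^1)\to(\PP^2,\eta(\mcB_I^1))$ for every $I$, and the (generic affine) Alexander polynomial is an invariant of the embedded topology. One justification you give is inaccurate, though easily repaired: $\Sing(\mcB^k)$ is \emph{not} the locus where $\mcB^k$ fails to be a topological $2$-manifold --- at a unibranch singularity such as a cusp the curve is locally the cone over a knot, hence a topological disk. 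The correct intrinsic characterization is the set of points where the \emph{pair} $(\PP^2,\mcB^k)$ fails to be locally homeomorphic to $(\RR^4,\RR^2)$ (the link of a genuine curve singularity is never the unknot); with that replacement your identification of the components as closures of the connected components of $\mcB^k\setminus\Sing(\mcB^k)$, and hence the construction of $\eta$, goes through as stated.
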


In \cite{bannai-tokunaga}, we make use of a subset of $\bCov(\PP^{2}, \mcB, D_{2p})$, where 
$\mcB$ is a conic arrangement consisting of $k+2$ conics $\mcC^{'} + \mcC^{''} + \mcC_{1}+\ldots +\mcC_{k}$
and $D_{2p}$ is the dihedral group of order $2p$ ($p$: odd prime) to construct Zariski $N$-plets for
conic arrangements. In fact, we study 
$\Cov_{b}(\PP^{2}, 2(\mcC^{'} + \mcC^{''}) 
+ p(\mcC_{i} + \mcC_{j}), D_{2p})$
$(1 \le i < j \le k)$.  In this article, we consider certain conic-quartic configurations $\mcB$ and show that
considering the whole of ${\bf Cov}(\PP^2, \mcB, G)$ is more effective. We also compute Alexander polynomials
 of the sub-cofigurations
 of $\mathcal{B}$. 

This article goes as follows. In \S 1, we summarize known results on $D_{2n}$-covers and  the theory of elliptic surfaces, which play important roles in our construction of examples. 
 In \S 2, we review the definition of Alexander polynomials and compute them for the configurations we consider.  In \S 3, we compare
${\bf Cov}(\PP^{2}, \mcB, G)$ with $\bAlex_{\mcB}$ and show that the former is a finer invariant than the latter through examples.




\section{$D_{2p}$-covers and elliptic surfaces}

\subsection{$D_{2n}$-covers}

We here introduce notation for dihedral covers.
Let $D_{2n}$ be the dihedral group of order $2n$. 
 In order to present $D_{2n}$, we use
 the notation
 \[
 D_{2n} = \langle \sigma, \tau \mid \sigma^2 = \tau^n = (\sigma\tau)^2 = 1\rangle.
 \]
 Given a $D_{2n}$-cover, we obtain a double cover  $D(X/Y)\to Y$ of $Y$ 
   canonically, by considering the
 $\CC(X)^{\tau}$-normalization of $Y$, where $\CC(X)^{\tau}$ denotes the fixed field 
 of the subgroup generated by $\tau$.  Here, $X$ is an $n$-fold cyclic cover of $D(X/Y)$ and
 we denote these covering morphisms by
 $\beta_1(\pi) : D(X/Y) \to Y$ and $\beta_2(\pi) : X \to D(X/Y)$, respectively.  

\subsection{Elliptic Surfaces}\label{elliptic}
We first summarize some facts from the theory of elliptic surfaces. As for details, we refer to 
\cite{kodaira}, \cite{miranda-basic}, \cite{miranda-persson} and \cite{shioda90}.

 In this article, by an {\it elliptic surface}, we always mean a smooth projective surface $S$ 
  with a fibration $\varphi : S \to C$ over a smooth projective curve $C$, satisfying the following:
 \begin{enumerate}
  \item[(i)] There exists a non-empty finite subset $\Sing(\varphi)$, of $C$, such
  that $\varphi^{-1}(v)$ is a smooth curve of genus $1$ (resp. a singular curve) for $v \in C\setminus \Sing(\varphi)$ (resp. $v \in \Sing(\varphi))$.
 \item[(ii)]  $\varphi$ has a section
 $O : C \to S$ (we identify $O$ with its image). 
 \item[(iii)]  $\varphi$ is minimal, i.e., there is no exceptional
 curve of the first kind in any fiber. 
 \end{enumerate}
 
 For $v \in \Sing(\varphi)$, we put $F_v = \varphi^{-1}(v)$. 
 We denote its irreducible decomposition by 
 \[
 F_v = \Theta_{v, 0} + \sum_{i=1}^{m_v-1} a_{v,i}\Theta_{v,i}, 
 \]
 where $m_v$ is the number of irreducible components of $F_v$ and $\Theta_{v,0}$ is the
 irreducible component with $\Theta_{v,0}O = 1$. We call $\Theta_{v,0}$ the {\it identity
 component}.  The classification  of singular fibers is well known (\cite{kodaira}). 
 Note that every
 smooth irreducible component of  reducible singular fibers is a rational curve with self-intersection number $-2$.

 We also define a subset of $\Sing(\varphi)$ by
 $\Red(\varphi) := \{v \in \Sing(\varphi)\mid \mbox{$F_v$ is reducible}\}$. Let 
 $\MW(S)$ be the set of sections of $\varphi : S \to  C$. From our assumption, 
 $\MW(S) \neq  \emptyset$. By regarding $O$ as the zero element of 
 $\MW(S)$ and 
 considering fiberwise addition (see \cite[\S 9]{kodaira} or \cite[\S 1]{tokunaga11} for
 the addition on singular fibers), 
 $\MW(S)$ becomes an abelian group. We denote its addition by $\dot{+}$.
 Note that the ordinary  $+$ is used for the sum of divisors, and the two operations should not be confused. 
 
Let $\NS(S)$ be the N\'eron-Severi group of $S$ and let $T_{\varphi}$ be the 
subgroup of $\NS(S)$ generated by $O, F$ and  $\Theta_{v,i}$ $(v \in \Red(\varphi)$,
$1 \le i \le m_v-1)$. Then we have the following theorems:

\begin{thm}\label{thm:shioda-basic0}{\cite[Theorem~1.2]{shioda90}) Under our assumption,
$\NS(S)$ is torsion free.
}
\end{thm}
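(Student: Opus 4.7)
The approach is to embed $\NS(S)$ into singular cohomology and reduce the problem to a purely topological statement about $H_{1}(S,\ZZ)$. The first step is to note that the cycle class map $c_{1}:\Pic(S)\to H^{2}(S,\ZZ)$ identifies $\NS(S)$ with a subgroup of $H^{2}(S,\ZZ)$, so it suffices to prove that $H^{2}(S,\ZZ)$ is torsion free. Since $S$ is a smooth compact oriented four-manifold, Poincar\'e duality yields $H^{2}(S,\ZZ)_{\tor}\cong H_{2}(S,\ZZ)_{\tor}$, and the universal coefficient theorem further identifies this with $H_{1}(S,\ZZ)_{\tor}$. Hence the theorem reduces to showing that $H_{1}(S,\ZZ)$ is torsion free.

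To establish this, I would use the fibration $\varphi:S\to C$ together with the section $O$. The identity $\varphi\circ O=\id_{C}$ gives $\varphi_{\ast}\circ O_{\ast}=\id$ on $H_{1}$, so $O_{\ast}:H_{1}(C,\ZZ)\to H_{1}(S,\ZZ)$ is a split injection and $\varphi_{\ast}$ is surjective. It remains to show that $\varphi_{\ast}$ is also injective, equivalently that no nontrivial one-cycles arise from the fibers. Decomposing $S$ into a tubular neighborhood $N$ of the union of singular fibers and the preimage $S^{\circ}$ of $C\setminus\Sing(\varphi)$, a Mayer--Vietoris argument reduces the question to a monodromy statement: the vanishing cycles coming from the singular fibers must generate all of $H_{1}$ of a generic smooth fiber.

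The core obstacle, and the step I expect to be the hardest, is precisely this monodromy analysis. The plan is to work case by case through the Kodaira list of singular fibers, which by minimality (condition (iii)) is exhaustive; for each type the local monodromy in $\SL_{2}(\ZZ)$ is explicit, and one has to verify that globally the vanishing cycles span $H_{1}$ of a smooth fiber. The existence of a section (condition (ii)) is essential here, since it rules out multiple fibers which would otherwise inject torsion into $\pi_{1}(S)$, and the non-emptiness of $\Sing(\varphi)$ in condition (i) guarantees the presence of vanishing cycles. Once this step is in place, one concludes $H_{1}(S,\ZZ)\cong H_{1}(C,\ZZ)$, a free abelian group of rank $2g(C)$, and the desired torsion freeness of $\NS(S)$ drops out.
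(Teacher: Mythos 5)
The paper does not actually prove this statement: it is quoted verbatim from \cite[Theorem~1.2]{shioda90}, so there is no internal argument to measure yours against. That said, your reduction is the classical one and is correct as far as it goes: $\NS(S)$ injects into $H^2(S,\ZZ)$ because the kernel of $c_1$ on $\Pic(S)$ is exactly $\Pic^0(S)$; Poincar\'e duality together with universal coefficients gives $H^2(S,\ZZ)_{\tor}\cong H_1(S,\ZZ)_{\tor}$; and the section splits $H_1(S,\ZZ)$ over $H_1(C,\ZZ)$, so everything comes down to showing that the fiber contributes nothing to $H_1(S,\ZZ)$.

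The genuine gap is in that last step, which you rightly identify as the hard one but whose proposed execution would not go through as described. A case-by-case inspection of the Kodaira list is a purely local analysis, and for a fiber of type ${\rm I}_n$ ($n\ge 1$) the local information is too weak: $\varphi^{-1}(\Delta)$ deformation retracts onto the singular fiber, which is a cycle of rational curves with $H_1\cong\ZZ$, so only the rank-one subgroup spanned by the single primitive vanishing cycle $\delta$ dies locally, and $(T-1)H_1(F)$ is again generated by $\delta$. Hence when every singular fiber is multiplicative --- the generic situation, e.g.\ twelve ${\rm I}_1$ fibers on a rational elliptic surface --- no local computation shows that the various vanishing cycles generate all of $H_1(F,\ZZ)\cong\ZZ^2$ rather than merely a finite-index sublattice. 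That distinction is precisely the point at issue: if they generated an index-$d$ sublattice, your Mayer--Vietoris computation would leave $\ZZ/d\ZZ$-torsion in $H_1(S,\ZZ)$, which is the very thing to be excluded (and which really does occur for elliptic surfaces with multiple fibers). The required statement is therefore global and integral; it needs the relation in $\pi_1(C\setminus\Sing(\varphi))$ forcing the product of the local monodromy transvections to equal a product of commutators (the identity when $C=\PP^1$), or else an appeal to Kodaira's theorem that $\pi_1(S)\cong\pi_1(C)$ for a relatively minimal elliptic surface with a section and at least one singular fiber. As written, your proof delegates all of the actual content to an unproved ``monodromy analysis'' whose local version is insufficient.
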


\begin{thm}\label{thm:shioda-basic}{(\cite[Theorem~1.3]{shioda90}) Under our assumption,
there is a natural map $\tilde{\psi} : \NS(S) \to \MW(S)$ which induces an isomorphisms of 
groups
\[
\psi : \NS(S)/T_{\varphi} \cong \MW(S).
\]
In particular, $\MW(S)$ is a finitely generated abelian group.
}
\end{thm}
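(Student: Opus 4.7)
The plan is to construct $\tilde{\psi}$ via restriction to the generic fiber, identify its kernel with $T_\varphi$, and then deduce finite generation from that of $\NS(S)$. Let $E_\eta$ denote the generic fiber of $\varphi : S \to C$, which is a smooth elliptic curve over the function field $\CC(C)$. Standard theory identifies $\MW(S)$ with the group $E_\eta(\CC(C))$ of $\CC(C)$-rational points, the zero element corresponding to $O$. Given a divisor $D$ on $S$, I would restrict $D$ to $E_\eta$ to obtain a divisor of some degree $d$ on $E_\eta$, then subtract $d \cdot O\vert_{E_\eta}$ to obtain a divisor of degree zero, whose class under the Abel--Jacobi map on $E_\eta$ yields a point of $E_\eta(\CC(C)) = \MW(S)$. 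Extending linearly and checking that linearly equivalent divisors on $S$ produce linearly equivalent restrictions on $E_\eta$ gives a well-defined homomorphism $\tilde{\psi} : \NS(S) \to \MW(S)$.

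Next I would verify that $T_\varphi$ lies in the kernel. The zero section $O$ restricts to the identity of $E_\eta$, hence maps to $0$. A general fiber $F$ restricts to the divisor of a rational function (coming from a local parameter on $C$), so its restriction is principal and $\tilde{\psi}(F) = 0$. For $v \in \Red(\varphi)$ and $i \ge 1$, the component $\Theta_{v,i}$ is contained in a special fiber and is disjoint from $E_\eta$, hence restricts to the empty divisor. Thus $T_\varphi \subset \ker \tilde{\psi}$, and $\tilde{\psi}$ descends to $\psi : \NS(S)/T_\varphi \to \MW(S)$.

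Surjectivity of $\psi$ is straightforward: any section $P : C \to S$ is a divisor on $S$ meeting $E_\eta$ in a single point, the image being $P\vert_{E_\eta} \in E_\eta(\CC(C))$, and $\tilde{\psi}([P]) = P$ by construction. The main obstacle is injectivity of $\psi$, that is, showing $\ker \tilde{\psi} \subset T_\varphi$. The plan here is: if $\tilde{\psi}([D]) = 0$ then on $E_\eta$ the restriction of $D$ is principal, so $D$ differs from a combination of $O$ and fiber components by a divisor that is principal on the generic fiber. One then uses that any divisor on $S$ whose restriction to $E_\eta$ is principal must itself differ from a vertical divisor (a $\ZZ$-linear combination of irreducible components of fibers $F_v$) by a principal divisor on $S$, the key input being that horizontal prime divisors restrict nontrivially to $E_\eta$. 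Kodaira's classification shows each fiber $F_v$ is a connected combination of the $\Theta_{v,i}$, and modulo $F$ and the non-identity components $\Theta_{v,i}$ ($i \ge 1$) only multiples of $\Theta_{v,0}$ remain, which are absorbed into $T_\varphi$ since $\Theta_{v,0} = F - \sum_{i \ge 1} a_{v,i} \Theta_{v,i}$. Combined with Theorem~\ref{thm:shioda-basic0} to rule out torsion issues in $\NS(S)$, this establishes the isomorphism.

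Finally, finite generation of $\MW(S)$ follows immediately, since $\NS(S)$ of any smooth projective surface is finitely generated by the theorem of the base (N\'eron--Severi), and quotients of finitely generated abelian groups are finitely generated. I expect the injectivity step to be the main obstacle, as it requires carefully invoking intersection-theoretic input to control the vertical part of divisors and showing that no ``hidden'' classes beyond those in $T_\varphi$ map to zero.
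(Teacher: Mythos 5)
The paper does not prove this statement at all: it is imported verbatim as \cite[Theorem~1.3]{shioda90} (the Shioda--Tate theorem), so there is no internal proof to compare yours against. On its own merits, your outline is the standard argument and is essentially correct: restrict to the generic fiber $E_\eta$, identify $\MW(S)$ with $E_\eta(\CC(C))$, check that $O$, $F$ and the non-identity components $\Theta_{v,i}$ die, get surjectivity from sections, and get injectivity by writing any divisor with trivial image as $dO$ plus a vertical divisor plus a principal divisor and then absorbing the vertical part into $T_\varphi$ via $F_v\equiv F$ and $\Theta_{v,0}=F-\sum_{i\ge 1}a_{v,i}\Theta_{v,i}$. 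Two points to tighten. First, your map is defined on divisors, and to descend it to $\NS(S)$ you must also check that algebraically trivial classes are killed; this is where hypothesis (i) (existence of a singular fiber) enters, since it forces $\Pic^0(S)=\varphi^*\Pic^0(C)$, which consists of vertical classes (it is automatic when $C=\PP^1$, the only case used later in the paper, but the theorem is stated for general $C$). Second, Theorem~\ref{thm:shioda-basic0} is not needed for the isomorphism $\NS(S)/T_\varphi\cong\MW(S)$ --- your kernel computation already identifies $\ker\tilde{\psi}$ with $T_\varphi$ exactly, with no torsion subtlety to rule out --- so that invocation is a red herring. The deduction of finite generation from the theorem of the base is fine.
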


In the following,  by the rank of $\MW(S)$, denoted by $\rank\MW(S)$, we mean that of the free part 
of $\MW(S)$.
 For a divisor on $S$, we  put $s(D) = \psi(D)$.  As for the relation between $D$ and $s(D)$, see 
 \cite[Lemma~5.1]{shioda90}.
 Also,  in \cite{shioda90},  a $\QQ$-valued bilinear form $\langle \, , \, \rangle$ on
 $\MW(S)$  is defined by using the intersection pairing on $\NS(S)$.   Here are two basic properties of $\langle \, , \, \rangle$:

\begin{itemize}

\item $\langle s, \, s \rangle \ge 0$ for $\forall s \in \MW(S)$ and the equality holds if and 
only if $s$ is an element of finite order in $\MW(S)$. 

\item An explicit formula for $\langle s_1,
s_2\rangle$ ($s_1, s_2 \in \MW(S)$) is given as follows:
\[
\langle s_1, s_2 \rangle = \chi({\mathcal O}_S) + (s_1, O) +( s_2, O) -( s_1, s_2) - \sum_{v \in \Red(\varphi)}
\mbox{Contr}_v(s_1, s_2),
\]
where $(,)$ denotes the intersection pairing of divisors and $\mbox{Contr}_v(s_1, s_2)$ is given by
\[
\mbox{Contr}_v(s_1, s_2) = ((s_1, \Theta_{v,1}), \ldots, (s_1, \Theta_{v, m_v-1}))(-A_v)^{-1}
\left ( \begin{array}{c}
        (s_2, \Theta_{v,1}) \\
        \vdots \\
        (s_2, \Theta_{v, m_v-1})
        \end{array} \right ).
\]
 As for explicit values of 
$\mbox{Contr}_v(s_1, s_2)$, we refer to \cite[(8.16)]{shioda90}.
\item Let $\MW(S)^0$ be the subgroup of $\MW(S)$ given by 
\[
\MW(S)^0 := \{ s \in \MW(S) \mid \mbox{$s$ meets $\Theta_{v, 0}$ for $\forall v \in \Red(\varphi)$.}\}
\]
$\MW(S)^0$ is called the {\it narrow part} of $\MW(S)$. By the explicit formula as above,
$(\MW(S)^0, \, \langle \, , \, \rangle)$ is a positive definite even integral lattice.
\end{itemize}

\subsection{Rational elliptic surfaces of admissible type and conics}\label{admissible}

 Let $\mathcal{Q}\subset\PP^2$ be a reduced quartic curve having at least one non-linear component, $P\in \mathcal{Q}$ be a general point on a non-linear component of $\mathcal{Q}$, $\Lambda_P$ be the pencil of  lines through $P$. Let $S^{\prime\prime}$ be the double cover of $\PP^2$ branched along $\mathcal{Q}$, and $S$ be the canonical resolution of  singularities of $S^{\prime\prime}$. The pencil $\Lambda_P$ lifts to a pencil of genus 1 curves in $S$, and by resolving the base points of this pencil, we obtain an elliptic surface which we denote by $S_P$.   We will denote the composition of all the morphisms by $f_P:S_P\to \PP^2$. The exceptional divisor of the final blow-up in resolving the pencil will be denoted by $O$ and we will consider it as the zero-section. There exists a group structure on the generic fiber of $\varphi: S \rightarrow \PP^1$ where $O$ is considered as  the zero element.  Note that the involution of the double cover lifts to the elliptic surface and the lift  coincides with the involution of the elliptic surface defined by taking the inverse in the above group structure.  The singular fibers  of  $S_P$  correspond to certain members of $\Lambda_P$ which are the tangent line of $\mathcal{Q}$ at $P$, and the lines passing through singular points of $\mathcal{Q}$. We will denote the components of the singular fiber corresponding to the tangent line by $\Theta_{0,i}$. The components of the other singular fibers will be denoted by $\Theta_{v,i}$ where $v\in \Sing(\mathcal{Q})$. In both cases the component that intersects with $O$ will be labeled by $i=0$. The resolution of base points of $\Lambda_P$ consists of two consecutive blow ups, and the strict transform of the exceptional divisor of the first blow up becomes $\Theta_{0,0}$ and the exceptional divisor of the second blow up becomes the zero-section $O$. The components of the other singular fibers consist of  the exceptional divisors 
of the resolution of singularities and the strict transforms of members of $\Lambda_P$  passing through the singular points of $\mathcal{Q}$. The latter of these components are the components that intersect $O$,  i.e. they are the components labeled $\Theta_{v,0}$. 

\begin{lem}\label{conic}
Under the setting given above,  let $s$ be a section of height equal to 2 contained in $\MW(S_P)^0$. Then $f_P(s)$ is a smooth conic such that (i) $P\in f_P(s)$, (ii) $f_P(s)$ and $\mathcal{Q}$ intersect at smooth points of $\mathcal{Q}$ and (iii) the local intersection multiplicities of $f_P(s)$ and $\mathcal{Q}$ are all even.
\end{lem}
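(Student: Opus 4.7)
\noindent{\textsl{Proof plan}.}\hskip 3pt The strategy is to use the height pairing from \S\ref{elliptic} to pin down the intersection numbers of $s$ with every relevant divisor on $S_P$, and then translate these into the geometric properties of the image $f_P(s) \subset \PP^2$.

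The first step will be numerical. Since $S_P$ is a rational elliptic surface, $\chi(\mathcal{O}_{S_P}) = 1$ and $(s, s) = -1$ by adjunction; and because $s \in \MW(S_P)^0$, each $\mathrm{Contr}_v(s, s)$ vanishes. The height formula then collapses to $\langle s, s \rangle = 2 + 2(s, O)$, so the hypothesis $\langle s, s \rangle = 2$ forces $(s, O) = 0$. The narrow condition also gives $(s, \Theta_{v, 0}) = 1$ and $(s, \Theta_{v, i}) = 0$ for $i \ge 1$, $v \in \Red(\varphi)$; in particular $(s, \Theta_{0, 0}) = 1$.

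Next I will compute $\deg f_P(s)$. Since both $O$ and $\Theta_{0, 0}$ are contracted to $P$ by $f_P$, the projection formula yields $(f_P^*(\ell), O) = (f_P^*(\ell), \Theta_{0, 0}) = 0$ for a generic $\ell \in \Lambda_P$ whose fiber is irreducible. Writing $f_P^*(\ell) = F + a O + b \Theta_{0, 0}$ and solving these two equations using $O^2 = -1$, $\Theta_{0, 0}^2 = -2$, $(O, \Theta_{0, 0}) = 1$, $(F, O) = 1$, $(F, \Theta_{0, 0}) = 0$ should give $(a, b) = (2, 1)$, and by linear equivalence the same numerical class represents $f_P^*(L)$ for any line $L \subset \PP^2$. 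The covering involution acts as $-1$ on $\MW(S_P)$, so $\langle s, s \rangle > 0$ rules out $s$ being fixed by it; hence $f_P|_s$ is birational and
\[
\deg f_P(s) = (s, f_P^*(L)) = (s, F) + 2(s, O) + (s, \Theta_{0, 0}) = 1 + 0 + 1 = 2.
\]
Since $s \cong \PP^1$ is irreducible, $f_P(s)$ is an irreducible reduced conic in $\PP^2$, hence automatically smooth.

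Finally I verify (i)--(iii). Property (i) is immediate from $(s, \Theta_{0, 0}) = 1$ together with $f_P(\Theta_{0, 0}) = P$. For (ii), the components $\Theta_{v, i}$ with $i \ge 1$ are exactly the exceptional divisors of the canonical resolution over the points of $\Sing(\mathcal{Q})$, and the narrow condition makes $s$ miss all of them, so $f_P(s)$ meets $\mathcal{Q}$ only at smooth points. For (iii), I work in local coordinates $(u, v)$ around a point $q \in f_P(s) \cap \mathcal{Q}$ in which $f_P$ takes the form $(u, v) \mapsto (u, v^2) = (x, y)$ and $\mathcal{Q}$ is given by $\{y = 0\}$. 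Since $f_P(s)$ is smooth at $q$ and $f_P|_s$ is birational, the restriction is a local isomorphism, so any local parametrization $t \mapsto (u(t), v(t))$ of $s$ at $f_P^{-1}(q)$ must satisfy $u'(0) \ne 0$; the local intersection number of $f_P(s)$ with $\mathcal{Q}$ at $q$ is then $\mathrm{ord}_t\, v(t)^2 = 2\,\mathrm{ord}_t\, v(t)$, which is even. The most delicate step is the pullback computation $f_P^*(L) \sim F + 2 O + \Theta_{0, 0}$: it relies on the explicit description of the two consecutive blow-ups resolving the base of $\Lambda_P$ given in \S\ref{admissible} to pin down the intersection pattern of $O$ and $\Theta_{0, 0}$; once this is in hand, the rest reduces to routine applications of the height formula and an easy local calculation at the double cover.
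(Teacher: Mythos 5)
Your route differs from the paper's in a genuine way, and most of it is sound. The paper never pulls back a line: it applies the height formula to the pair $(s,-s)$ to get $(s,-s)=3$, pushes the divisor $s+(-s)$ down to the double cover $S$ (where its image $\bar s+\overline{-s}$ has self-intersection $8$), and reads off $(f_P(s),f_P(s))=4$, hence that $f_P(s)$ is an irreducible conic; the evenness of all local intersection multiplicities with $\mathcal{Q}$ then follows in one stroke from the fact that the preimage of $f_P(s)$ in the double cover branched along $\mathcal{Q}$ splits into the two components $\bar s$ and $\overline{-s}$. Your alternative — computing $\deg f_P(s)=(s,f_P^*L)$ from $f_P^*L\sim F+2O+\Theta_{0,0}$ — is correct (the coefficients $(2,1)$ do come out of the stated intersection numbers of $O$, $\Theta_{0,0}$ and $F$), and your use of $\langle s,s\rangle\neq 0$ to rule out invariance of $s$ under the covering involution, hence to get birationality of $f_P|_s$, is exactly the point needed to identify $(s,f_P^*L)$ with $\deg f_P(s)$. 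Parts (i) and (ii) and the smooth-conic claim are fine.

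The gap is in (iii), at the point $P$ itself. By your own part (i), $P\in f_P(s)\cap\mathcal{Q}$, but at $P$ the map $f_P$ is \emph{not} locally of the form $(u,v)\mapsto(u,v^2)$: over $P$ it is the double cover composed with the two blow-ups resolving the base point of $\Lambda_P$, so your local model, and with it the computation $\mathrm{ord}_t\,v(t)^2=2\,\mathrm{ord}_t\,v(t)$, applies only at the points of $f_P(s)\cap\mathcal{Q}$ other than $P$. You need a separate argument there. The cheapest fix is parity via B\'ezout: $(f_P(s),\mathcal{Q})=2\cdot 4=8$ is even and every local multiplicity away from $P$ is even, so the one at $P$ is even as well. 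Alternatively, in the charts you already set up to compute $f_P^*L$ the map reads (in suitable coordinates) $(x_2,w_2)\mapsto(x_2^2w_2,\,x_2^2w_2^2)$ with $\mathcal{Q}=\{y=0\}$, and a section meeting $\Theta_{0,0}$ off $O$ is a graph $x_2=a+O(w_2)$ with $a\neq0$, giving local multiplicity $2$ at $P$; or one can simply invoke the paper's splitting argument, which treats $P$ and the other tangency points uniformly.
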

\proof First we note that by the explicit formula for the height pairing we have
\[
\langle s, s \rangle=2+(s, O)-\sum{\rm contr}_v(s)
\]
Since $s\in \MW(S)^0$ and the height of $s$ is 2, we have $(s, O)=0$. Similarly we have $(-s,O)=0$. Since $\langle s, -s \rangle=-2$, and from the explicit formula 
\[
\langle s, -s \rangle=1+(s, O)+(-s, O)-(s, -s)-\sum{\rm contr}_v(s)
\]
we have $(s, -s)=3$. After blowing down $O$ and $\Theta_{0,0}$, in this order, the strict transform $\bar{s}$ (resp. $\overline{-s}$) of $s$ (resp. $-s$)  on $S$ satisfy $P\in\bar{s}\cap\overline{-s}$, $\bar{s}^2=\overline{-s}^2=0$, $(\bar{s}, \overline{-s})=4$. Hence $(\bar{s}+\overline{-s}, \bar{s}+\overline{-s})=8$.  Since $s\in \MW(S_P)^0$, $s$ does not intersect any exceptional divisors of the resolution of singularities, so we have $(f_P(s),f_P(s))=4$, hence $f_P(s)$ is a irreducible conic. The condition on the local intersection numbers follow because $f_P(s)$ is the image of $\bar{s}+\overline{-s}$. 

An elliptic surface $S$ such that $\MW(S)^0$ is isomorphic to a root lattice, is called an elliptic surface of admissible type. This terminology is due to Shioda (\cite{shioda92}).

\begin{cor}
Suppose $S_P$ is an rational elliptic surface of admissible type. Let $s$ be a root in  $\MW(S_P)^0$. Then $\mathcal{C}=f_P(s)=f_P(-s)$ is a conic satisfying the properties in Lemma \ref{conic}.
\end{cor}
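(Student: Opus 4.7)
The proof will be an essentially immediate application of Lemma~\ref{conic} together with a symmetry observation about the double cover involution. The plan is as follows.

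First, I would unpack the hypothesis. Saying that $S_P$ is an elliptic surface of admissible type means that $\MW(S_P)^0$, equipped with the height pairing $\langle\,,\,\rangle$, is isomorphic to a root lattice. A root in this lattice is by definition an element $s$ with $\langle s, s\rangle = 2$. Hence every root $s\in\MW(S_P)^0$ automatically satisfies the two hypotheses of Lemma~\ref{conic}: it lies in the narrow part, and its height equals $2$. Applying Lemma~\ref{conic} then yields that $f_P(s)$ is a smooth conic through $P$ meeting $\mathcal{Q}$ only at smooth points of $\mathcal{Q}$ and with all local intersection multiplicities even.

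The only remaining point is the equality $f_P(s)=f_P(-s)$. This is the symmetry coming from the double cover $S''\to\PP^2$. Recall from Section~\ref{admissible} that the covering involution of the double cover lifts to an involution on $S_P$ which coincides with the group-theoretic inversion $t\mapsto -t$ on the generic fiber of $\varphi:S_P\to\PP^1$, and hence acts as $s\mapsto -s$ on $\MW(S_P)$. Since $f_P$ factors through the quotient by this involution, we have $f_P\circ(-\mathrm{id})=f_P$ on sections, which gives $f_P(-s)=f_P(s)$.

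I do not expect any serious obstacle. The lemma does all the geometric work; the corollary is essentially a re-labeling of the height-$2$ hypothesis in lattice-theoretic terms plus the trivial observation about the covering involution. The only thing worth double-checking is that the roots of $\MW(S_P)^0$ really lie in the narrow part (which is automatic, since by definition $\MW(S_P)^0$ \emph{is} the narrow part), so the application of Lemma~\ref{conic} is unconditional.
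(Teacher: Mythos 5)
Your proof is correct and follows essentially the same route as the paper: reduce to Lemma \ref{conic} by observing that every root of $\MW(S_P)^0$ has height $2$. The paper's one-line proof justifies this by noting that the narrow Mordell--Weil lattices that occur are direct sums of root lattices of types $A$, $D$, $E$ (all simply laced, so every root has norm $2$), whereas you take norm $2$ as the definition of a root; either way the lemma applies, and your additional remark that the deck involution forces $f_P(s)=f_P(-s)$ is consistent with (indeed implicit in) the lemma's proof, where the conic arises as the image of $\bar{s}+\overline{-s}$.
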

\proof We only have to note that all root lattices that appear as a narrow Mordell-Weil lattice are combinations of root lattices of type $A$, $D$, $E$, whose roots have height equal to $2$.\qed

By varying the point $P$, we can construct families of conics that are tangent to $\mathcal{Q}$. 

\begin{cor}\label{subscribed}
Let $S_P$ be  rational elliptic surface of admissible type. Let $\{\pm s_1,\ldots, \pm s_r\}$ be the set of roots of $\MW(S_P)^0$. Then there exist families of conics $\mathcal{F}_1, \ldots, \mathcal{F}_r$ in $\PP^2$ such that if  $\mathcal{C}\in \mathcal{F}_i$ the following hold: 
\begin{enumerate}
 \item $\mathcal{C}$ intersects $\mathcal{Q}$ at smooth points of $\mathcal{Q}$,
 \item  the local intersection multiplicity at each intersection point is even, and
 \item  the inverse image of $\mathcal{C}$ in $S_P$ consists of two irreducible components $f_P^{-1}(\mathcal{C})=\mcC^+\cup \mcC^-$ such that $\{\tilde{\psi}_P(\mcC^+), \tilde{\psi}_P(\mcC^-)\}=\{s_i, -s_i\}$.
 \end{enumerate}
\end{cor}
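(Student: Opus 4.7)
The plan is to apply the preceding corollary pointwise as $P$ varies along the smooth locus of a non-linear component of $\mathcal{Q}$, and to collect the resulting conics into the desired families. For each fixed $P$, each root pair $\{s_i, -s_i\}\subset \MW(S_P)^0$ yields, by that corollary, a single smooth conic $\mathcal{C} = f_P(s_i) = f_P(-s_i)$ satisfying conditions (ii) and (iii) of Lemma~\ref{conic}, which are precisely conditions (1) and (2) of the statement. I would then define $\mathcal{F}_i$ to be the family of all such conics obtained by letting $P$ range over this smooth locus. Since the singular fiber configuration of $S_P$ is constant for $P$ in a Zariski open subset of the non-linear component, each root $s_i$ gives a flat section of the corresponding family of elliptic surfaces, so $\mathcal{F}_i$ carries the structure of an algebraic family parametrized by (an open subset of) that component.

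Properties (1) and (2) then hold for every $\mathcal{C}\in\mathcal{F}_i$ directly from Lemma~\ref{conic}. For property (3), I would unpack the factorization $f_P : S_P \to S \to S'' \to \PP^2$, where $S''\to \PP^2$ is the double cover of $\PP^2$ branched along $\mathcal{Q}$. Because of conditions (1) and (2), the double cover is \'etale over $\mathcal{C}\setminus \mathcal{Q}$ and locally splits over the (even-order) tangency points, so the pullback of $\mathcal{C}$ in $S''$ decomposes into two irreducible components; their strict transforms under the resolution of singularities and the subsequent blow-ups of base points of $\Lambda_P$ give two irreducible curves $\mcC^+, \mcC^- \subset S_P$ whose union is the reduced strict transform of $\mathcal{C}$ under $f_P$. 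The proof of Lemma~\ref{conic} already identifies these two curves with the sections $s_i$ and $-s_i$: the strict transforms $\bar{s}_i$ and $\overline{-s}_i$ on $S$ both project onto $\mathcal{C}$, and the self-intersection computation $(\bar{s}_i + \overline{-s}_i)^2 = 8 = 2\cdot \mathcal{C}^2$ from that proof matches the expected degree of the pullback, ruling out further non-exceptional components. Applying $\tilde{\psi}_P$ then yields $\{\tilde{\psi}_P(\mcC^+), \tilde{\psi}_P(\mcC^-)\} = \{s_i, -s_i\}$.

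The main obstacle I anticipate is of a technical bookkeeping nature: confirming that $f_P^{-1}(\mathcal{C})$, viewed inside $S_P$, really decomposes into exactly two non-exceptional irreducible components, and that these are precisely the strict transforms of $s_i$ and $-s_i$ rather than some exceptional perturbation thereof. This hinges on the fact that $s_i\in\MW(S_P)^0$ is, by definition, disjoint from every non-identity component of each reducible singular fiber, so the pullback of $\mathcal{C}$ stays clean on the relevant open part of $S_P$ and the identification with the roots can be made unambiguous. Once this is in place, varying $P$ in the family immediately promotes the pointwise correspondence to the family statement, completing the proof.
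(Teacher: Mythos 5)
Your construction of the families by letting the base point vary, and your verification of conditions (1) and (2) via Lemma~\ref{conic}, are consistent with what the paper intends (the text just before the corollary says exactly that the families arise ``by varying the point $P$''). The paper's own proof, however, consists of the single observation that (1) and (2) are already contained in the preceding corollary and that the only new content is condition (3), which it obtains by citing Theorem~1 of \cite{bannai-tokunaga}; you try to replace that citation with a direct argument, and this is where a genuine gap appears.

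The issue is that condition (3) is a statement about the pullback to the \emph{fixed} surface $S_P$ named at the start of the corollary: every member $\mathcal{C}\in\mathcal{F}_i$ must satisfy $\{\tilde{\psi}_P(\mcC^+),\tilde{\psi}_P(\mcC^-)\}=\{s_i,-s_i\}$ in the one group $\MW(S_P)$. This is how the corollary is used afterwards: in Lemma~\ref{criterion} and in Example~2 several conics drawn from different families are all pulled back to the \emph{same} $S_P$ (with $P=[0:1:0]$ fixed) and their classes are compared inside the single lattice $\MW(S_P)$; note also that condition (i) of Lemma~\ref{conic}, namely $P\in f_P(s)$, is deliberately absent from the corollary, so a general member of $\mathcal{F}_i$ does not pass through $P$ and the two components of its preimage are \emph{bisections} of $S_P$, not sections. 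Your identification of $\mcC^{\pm}$ with the sections $s_i$ and $-s_i$ via the proof of Lemma~\ref{conic} is therefore valid only for the single member $f_P(s_i)$ of the family; for a conic produced from a different base point $P'$, what your argument actually yields is $\{\tilde{\psi}_{P'}(\mcC^+),\tilde{\psi}_{P'}(\mcC^-)\}=\{s_i,-s_i\}$ in the \emph{different} group $\MW(S_{P'})$, and the closing claim that varying $P$ ``immediately promotes the pointwise correspondence to the family statement'' silently conflates $\tilde{\psi}_{P'}$ with $\tilde{\psi}_P$. Closing this gap requires an extra argument --- for instance, that the classes of the two components in the discrete group $\NS(S_P)/T_{\varphi}\cong\MW(S_P)$ are locally constant along the connected family and equal $\pm s_i$ at the special member through $P$, where one must also check that the exceptional components absorbed in the flat limit lie in $T_{\varphi}$ --- and this is precisely the content of Theorem~1 of \cite{bannai-tokunaga} that the paper invokes and that your proposal omits.
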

\proof The only thing new here is the final statement $\{\tilde{\psi}_P(\mcC^+), \tilde{\psi}_P(\mcC^-)\}=\{s_i, -s_i\}$ which follows from Theorem 1 in \cite{bannai-tokunaga}.\qed

\subsection{Elliptic $D_{2p}$-covers}
We keep the same notation as the previous subsection. In \cite{tokunaga12}, the third author gave a criterion for the existence of dihedral covers branched at $\mathcal{Q}$ and some additional loci, in terms of elliptic surfaces. The criterion, rephrased in our situation here,  is  as follows:
\begin{lem}[\cite{tokunaga12} Theorem 3.1]\label{criterion}
Let $p$ be an odd prime number. Let $\mathcal{C}$ be a reduced possibly reducible curve. Let $\mathcal{C}=\mathcal{C}_1+\cdots+\mathcal{C}_n$ be its irreducible decomposition. Suppose that $f^\ast_P(\mathcal{C}_i)$ is reduced and horizontal in $S_P$ and $f_P^\ast(\mathcal{C}_i)=\mathcal{C}_i^++\mathcal{C}_i^-$, i.e. it  splits into two irreducible components. Then the following statements are equivalent.
\begin{itemize}
\item There exists a $D_{2p}$-cover of $\PP^2$ branched at $2\mathcal{Q}+p\mathcal{C}$.
\item There exist positive integers $a_i$, $1\leq a_i<p$ such that $\sum_{i=1}^n a_i{\tilde{\psi}}(\mathcal{C}_i^+)\in p\MW(S_P)$ where
$p\MW(S_P):=\{pv\mid v\in\MW(S_P)\}$. 
\end{itemize} 
\end{lem}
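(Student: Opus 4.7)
The plan is to decompose the $D_{2p}$-cover along the tower $X \to D(X/\PP^2) \to \PP^2$ given by the $\langle\tau\rangle$-quotient, as recalled in \S 1.1. Since $p$ is odd, in the factorization $\pi = \beta_1(\pi)\circ\beta_2(\pi)$ the ramification of index $2$ along $\mathcal{Q}$ must be absorbed by the double cover $\beta_1(\pi)$, and the ramification of index $p$ along $\mathcal{C}$ by the cyclic $p$-cover $\beta_2(\pi)$. Thus $\beta_1(\pi) : D(X/\PP^2)\to\PP^2$ is forced to be the double cover branched exactly at $\mathcal{Q}$, which, after the canonical resolution and the base-point resolution of $\Lambda_P$, is identified birationally with $f_P : S_P\to\PP^2$. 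So the existence question reduces to: when does there exist a cyclic $\ZZ/p\ZZ$-cover of $S_P$ branched along the $\mathcal{C}_i^\pm$ which is equivariant under the covering involution $\iota$ of $\beta_1(\pi)$, with $\iota$ acting by inversion on $\ZZ/p\ZZ$ (as dictated by the dihedral relation $\sigma\tau\sigma=\tau^{-1}$)?

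Next, I would apply standard cyclic cover theory, derived from Proposition~\ref{prop:fund} applied to $S_P$ together with the torsion-freeness of $\NS(S_P)$ from Theorem~\ref{thm:shioda-basic0}. A $\ZZ/p\ZZ$-cover of $S_P$ branched with indices $a_i,b_i$ $(1 \le a_i,b_i \le p-1)$ along $\mathcal{C}_i^+,\mathcal{C}_i^-$ exists if and only if the divisor class $\sum_i (a_i\mathcal{C}_i^+ + b_i\mathcal{C}_i^-)$ is divisible by $p$ in $\NS(S_P)$. Since $\iota$ swaps $\mathcal{C}_i^+ \leftrightarrow \mathcal{C}_i^-$ and must act on the cover by inversion, $\iota$-equivariance forces $b_i \equiv -a_i \pmod p$, so the branch class has the form $\sum_i a_i(\mathcal{C}_i^+ - \mathcal{C}_i^-)$ modulo $p\NS(S_P)$.

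The final step is to pass through the isomorphism $\psi:\NS(S_P)/T_\varphi \cong \MW(S_P)$ of Theorem~\ref{thm:shioda-basic}. Because $\iota$ agrees with inversion in the fiberwise group law on $S_P$ (see \S\ref{admissible}), one has $\tilde\psi(\mathcal{C}_i^-) = -\tilde\psi(\mathcal{C}_i^+)$, so modulo $T_\varphi$
\[
\sum_i a_i(\mathcal{C}_i^+ - \mathcal{C}_i^-) \;\longmapsto\; 2\sum_i a_i\,\tilde\psi(\mathcal{C}_i^+).
\]
Since $\gcd(2,p)=1$, the $p$-divisibility condition becomes exactly $\sum_i a_i\,\tilde\psi(\mathcal{C}_i^+)\in p\MW(S_P)$, as claimed.

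The main technical obstacle I anticipate is the compatibility between $p$-divisibility in $\NS(S_P)$ and in the quotient $\MW(S_P) = \NS(S_P)/T_\varphi$: being divisible modulo $T_\varphi$ is a priori weaker than being divisible in $\NS(S_P)$. To reverse the implication one must exhibit, for any MW-class whose image is $p$-divisible, a lift to $\NS(S_P)$ whose $T_\varphi$-correction is itself $p$-divisible up to a class that can be absorbed into the unramified part of the intended cover. This uses the explicit description of $T_\varphi$ in terms of $O$, $F$ and the $\Theta_{v,i}$, together with the assumption that each $f_P^\ast(\mathcal{C}_i)$ is horizontal, which prevents any spurious fiber-component ramification from being introduced.
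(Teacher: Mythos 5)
You should first be aware that the paper contains no proof of Lemma~\ref{criterion} to compare against: it is imported verbatim (merely rephrased) from Theorem~3.1 of \cite{tokunaga12}. That said, your architecture --- factor $\pi$ through $\beta_1(\pi):D(X/\PP^2)\to\PP^2$ and $\beta_2(\pi)$, note that for odd $p$ the index-$2$ ramification along $\mathcal{Q}$ forces $D(X/\PP^2)$ to be the double cover branched at $\mathcal{Q}$, hence birational to $S_P$, reduce to an $\iota$-equivariant $p$-cyclic cover of $S_P$ with $b_i\equiv -a_i \pmod p$, and translate via $\tilde{\psi}(\mathcal{C}_i^-)=-\tilde{\psi}(\mathcal{C}_i^+)$ and $\gcd(2,p)=1$ --- is exactly the strategy behind Tokunaga's criteria, and the ``only if'' half goes through essentially as you describe (using that $S_P$ is a simply connected rational elliptic surface with torsion-free $\NS$, and that $\psi$ kills all exceptional and fiber components).

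The problem is the step you flag as ``the main technical obstacle'' and then leave open: it is not a routine compatibility check but the actual content of the theorem. In the ``if'' direction you must promote $p$-divisibility of $\sum a_i\tilde{\psi}(\mathcal{C}_i^+)$ in $\MW(S_P)=\NS(S_P)/T_\varphi$ to $p$-divisibility in $\NS(S_P)$ of an $\iota$-anti-invariant class supported only on the $\mathcal{C}_i^\pm$ and on curves contracted by $f_P$. Writing $\sum a_i\mathcal{C}_i^+ - pS_0=\alpha O+\beta F+\sum c_{v,i}\Theta_{v,i}$ and subtracting the $\iota$-pullback, the $O$ and $F$ terms cancel (both are $\iota$-invariant), but one is left with $\sum c_{v,i}(\Theta_{v,i}-\iota^*\Theta_{v,i})$. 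This vanishes only when inversion preserves every component of every reducible fiber --- true for types $\I_2$ and $\III$, but false in general (for $\I_n$ with $n\ge 3$ inversion acts on the component group by negation and genuinely permutes components). Worse, any fiber components that survive into the branch divisor of the cyclic cover are not all contracted: the strict transforms of members of $\Lambda_P$ (the tangent line at $P$, the lines through $\Sing(\mathcal{Q})$) and the general fiber $F$ map to lines, so leaving them in would enlarge the branch locus of the resulting $D_{2p}$-cover beyond $2\mathcal{Q}+p\mathcal{C}$, contradicting the statement you are proving. Closing this requires either a fiber-type-by-fiber-type analysis or the explicit divisor representative of a section modulo $T_\varphi$ from \cite[Lemma~5.1]{shioda90} --- precisely the work done in \cite{tokunaga04} and \cite{tokunaga12}. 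As written, your argument establishes the equivalence only modulo this unproved divisibility transfer.
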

Since Corollary \ref{subscribed} allows us to construct families of conics with prescribed images under $\tilde{\psi}$, combining with the criterion above, we have a method of constructing various configurations that are or are not branch curves of $D_{2p}$-covers of $\PP^2$.

\section{Alexander polynomials of certain reducible curves}\label{alex}

\subsection{Alexander Polynomials}

Let $\mathcal{C}\subset\CC^2$ be an affine curve of degree $d$.
Suppose that the line at infinity $L_{\infty}$
intersects transversely with $\mathcal{C}$. Put $X:=\CC^2\setminus\mathcal{C}$.
Let $\phi:\pi_1(X)\to \Bbb{Z}$ be 
the composition of Hurewicz homomorphism and the summation homomorphism.
Let $t$ be a generator of $\Bbb{Z}$ and 
$\Lambda:=\Bbb{C}[t,t^{-1}]$.
We consider an infinite cyclic covering $p:\tilde{X}\to X$ such that
$p_{*}(\pi_1(\tilde{X}))=\ker \phi$.
Then $H_1(\tilde{X},\Bbb{C})$ has a structure of a $\Lambda$-module. 
Thus we have
\[
 H_1(\tilde{X},\Bbb{C})=\Lambda/\lambda_1(t)\oplus\cdots \oplus\Lambda/\lambda_m(t)
\]
where we can take $\lambda_i(t)\in \Lambda$ as a polynomial in $t$ such that
$\lambda_i(0)\ne 0$ for $i=1,\dots, m$.
The Alexander polynomial $\Delta_\mathcal{C}(t)$ is defined  by the product
$\prod_{i=1}^m \lambda_i(t)$.

In this paper,
 we use the {\it{Loeser-Vaqui\'e formula}} (\cite{OkaSurvey,OkaAtlas})
for calculating Alexander polynomials.
Hereafter we follow the notation and terminology of
\cite{Kawashima2,OkaAtlas} for the Loeser-Vaqui\'e formula.

 \subsubsection{{\rm{\bf{Loeser-Vaqui\'e formula}}}}
Let $[X,Y,Z]$ be homogenous coordinates of $\Bbb{P}^2$ and
let   $(x,y)=(X/Z,X/Z)$ be affine coordinates of
$\Bbb{C}^2=\Bbb{P}^2\setminus \{Z=0\}$.
Let $f(x,y)$ be the defining polynomial of $\mathcal{C}$ of degree $d$.
%
 Let $\Sing(\mathcal{C})$ be the set of singular points  of $\mathcal{C}$
and let $P\in \Sing(\mathcal{C})$ be a singular point.
 Consider a 
 resolution $\pi:\tilde{U}\to U$ of $(\mathcal{C},P)$, 
 and let $E_1,\dots,E_s$ be the exceptional divisors of $\pi$.
Let $(u,v)$ be a local coordinate system centered at $P$
and $k_i$, $m_i$ be  respective  orders of zero of the canonical two form 
$\pi^{*}(du \wedge dv)$ and $\pi^{*}f$ along the divisor $E_i$.
{\em{The adjunction ideal $\mathcal{J}_{P,k,d}$ of $\mathcal{O}_P$}} 
 is defined by 
\[
\mathcal{J}_{P,k,d} 
=\left\{\phi\in \mathcal{O}_P \ |\  (\pi^{*}\phi)\ge
\sum_{i}(\lfloor km_i/d\rfloor-k_i)E_i\right\},
 \quad k=1,\dots,d-1
\]
where $\lfloor * \rfloor=\max \{ n\in \Bbb{Z}\mid n \le *\}$,  the floor function.

Let $O(j)$ be the set of polynomials in $x,y$ 
whose degree is less than or equal to $j$.
We consider the canonical mapping 
$\sigma:\,{\Bbb C}[x,y]\to \bigoplus_{P\in \Sing(\mathcal{C})}\mathcal{O}_P$ and its restriction:
\[
\sigma_{k}:\,O(k-3)\to \bigoplus_{P\in \Sing(\mathcal{C})}\mathcal{O}_P.
\]
Put $V_k(P):=\mathcal{O}_{P}/\mathcal{J}_{P,k,d}$ and
denote the composition of $\sigma_k$ and the natural surjection 
$\bigoplus\mathcal{O}_P\to \bigoplus V_k(P)$
by $\bar{\sigma}_{k}$.
Then the Alexander polynomial of $C$ is given by the following formula called the Loeser-Vaqui\'e formula:

\begin{thm}\label{Loeser-Vaquie}
$(\cite{MR85h:14017,Loeser-Vaquie,Artal,Esnault})$
The reduced Alexander polynomial $\tilde{\Delta}_\mathcal{C}(t)$ is given by 
\[\tag{1}\label{A1}
\tilde{\Delta}_{\mathcal{C}}(t)=\prod_{k=1}^{d-1}\Delta_k(t)^{\ell_k},\quad
  \ell_k:=\dim {\rm coker}\, \bar{\sigma}_k
 \]
 where
\[
 \Delta_k(t)=\left(t-\exp\left(\dfrac{2k\pi i}{d}\right)\right)
\left(t-\exp\left(-\dfrac{2k\pi i}{d}\right)\right).
\]
  The Alexander polynomial $\Delta_{\mathcal{C}}(t)$ is given as
\[
 \Delta_{\mathcal{C}}(t)=(t-1)^{r-1}\tilde{\Delta}_{\mathcal{C}}(t)
\]
where $r$ is the number of irreducible components of $\mathcal{C}$.
\end{thm}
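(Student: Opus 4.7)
The plan is to realize the Alexander polynomial as the characteristic polynomial of monodromy on the first cohomology of a cyclic branched cover of $\PP^2$, and to compute that cohomology using Esnault--Viehweg theory for cyclic covers together with an adjunction-ideal calculation on an embedded resolution.

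First I would consider the $d$-fold cyclic cover $Y\to\PP^2$ branched along $\mcC + L_\infty$ and compare it with the infinite cyclic cover $\widetilde X\to X=\CC^2\setminus\mcC$. Because $L_\infty$ meets $\mcC$ transversally, a standard Milnor-type argument (using that the boundary at infinity contributes nothing beyond the abelian summand) identifies $H_1(\widetilde X,\CC)$ equivariantly with $H^1$ of a smooth model of $Y$, the deck transformation $t$ corresponding to the generator of the Galois group $\ZZ/d\ZZ$. Decomposing into $\exp(2\pi ik/d)$-eigenspaces for $k=1,\ldots,d-1$ yields $\widetilde\Delta_{\mcC}(t)=\prod_{k}\Delta_k(t)^{\ell_k}$, with $\ell_k$ equal to the dimension of the $k$-th eigenspace. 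The additional $(t-1)^{r-1}$ factor in $\Delta_\mcC(t)$ then accounts for the free part $\ZZ^{r}$ of $H_1(X,\ZZ)$ generated by meridians around the $r$ components of $\mcC$.

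Second I would resolve each singular point via $\pi:\widetilde U\to U$ and compute each $\ell_k$ on the resolution. Applying the Esnault--Viehweg decomposition to the desingularized cyclic cover, the $k$-th eigenspace of $H^1$ is computed by $H^1$ of a line bundle on the resolution whose push-forward to $\PP^2$ is $\mathcal{J}_{k,d}(k-3)$, where $\mathcal{J}_{k,d}$ is the ideal sheaf with stalk $\mathcal{J}_{P,k,d}$ at each $P\in\Sing(\mcC)$. The exceptional coefficients $\lfloor km_i/d\rfloor-k_i$ appearing in the definition of $\mathcal{J}_{P,k,d}$ arise precisely from comparing the canonical divisor $K_{\widetilde U}=\pi^*K_U+\sum_i k_iE_i$ with the round-down of the $\QQ$-divisor $(k/d)\pi^*\mcC=\sum_i(km_i/d)E_i+\cdots$. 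The short exact sequence
\[
0\longrightarrow \mathcal{J}_{k,d}(k-3)\longrightarrow \mcO_{\PP^2}(k-3)\longrightarrow \bigoplus_{P\in\Sing(\mcC)}V_k(P)\longrightarrow 0,
\]
combined with the vanishing $H^1(\PP^2,\mcO(k-3))=0$ for $k\ge 2$ (the case $k=1$ is handled directly since $H^0(\mcO(-2))=0$), then identifies $\ell_k=\dim\mathrm{coker}\,\bar\sigma_k$.

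The main obstacle is the floor-function bookkeeping at each singular point: one must verify that the coefficients $\lfloor km_i/d\rfloor-k_i$ assemble correctly under $\pi_*$ to recover the adjunction ideal $\mathcal{J}_{P,k,d}$, and that the higher direct images $R^j\pi_*$ of the relevant line bundle on $\widetilde U$ vanish for $j\ge 1$. The vanishing follows from a Kawamata--Viehweg-type argument applied to the nef, effective fractional divisor $\sum_i\{km_i/d\}E_i$ supported on the exceptional locus; the identification of $\pi_*$ with $\mathcal{J}_{k,d}$ is then a local unwinding of the definitions of $m_i$, $k_i$ and the adjunction ideal. Once these two local-to-global checks are carried out, the Loeser--Vaqui\'e formula follows by collecting eigenspace contributions.
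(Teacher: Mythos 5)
The paper does not actually prove Theorem~\ref{Loeser-Vaquie}: it is imported as a known result, with the proof delegated to the cited works of Libgober, Loeser--Vaqui\'e, Artal and Esnault, so there is no in-paper argument to measure yours against. That said, your sketch is in substance the argument of those references: realize the Alexander module as the monodromy module of a degree-$d$ cyclic covering, decompose $H^1$ into $\exp(2\pi ik/d)$-eigenspaces, and compute the $k$-th eigenspace as $H^1(\PP^2,\mathcal{J}_{k,d}(k-3))=\mathrm{coker}\,\bar\sigma_k$ via the ideal-sheaf exact sequence, with the local adjunction conditions and the relative vanishing $R^j\pi_*=0$ ($j\ge1$) supplied by the resolution. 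Two points would need tightening in a full write-up. First, the divisor $\mcC+L_\infty$ has degree $d+1$, so the $d$-fold cyclic cover of $\PP^2$ branched there is not well defined; the correct object is the Milnor fiber $\{F=1\}\subset\CC^3$ of the cone over $\mcC$ (equivalently, a resolution of the $d$-fold cover branched along the projective closure of $\mcC$, which has degree $d$), and the transversality of $L_\infty$ is precisely what lets one identify its $H_1$ with the reduced Alexander module \`a la Libgober. Second, the factor $(t-1)^{r-1}$ should be justified by the Wang-type sequence giving $H_1(\widetilde X)/(t-1)H_1(\widetilde X)\cong\ker\bigl(H_1(X)\to\ZZ\bigr)\cong\ZZ^{r-1}$, rather than by appeal to ``the free part of $H_1(X)$''; as phrased, your justification conflates $H_1(X)$ with $H_1(\widetilde X)$. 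Neither point is a fatal gap, and the overall route is the standard one.
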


For $A_1$ and $A_3$ singularities, the adjunction ideal can be calculated as follows. For the details of the calculations see \cite{OkaAtlas} \S 2.1 Lemma 2.

\begin{lem}\label{egideal}
 Let $\mathcal{C}: f=0$ be a plane curve of degree $2n+4$ such that 
 $\mathcal{C}$ has only $A_1$ and $A_3$ singularities.
 Let $P$ be a singular point.
\begin{enumerate}
\item[$(1)$]  Assume that  $P$ is an $A_1$ singularity.
 Then the adjunction ideal is  
\[
 \mathcal{J}_{P,k,2n+4}=
 \left\langle u^av^b\mid a+b\ge  \left\lfloor \frac{k}{n+2} \right\rfloor-1\right\rangle=\mathcal{O}_P,
\]
for all $k=3,\dots,2n+3$.
Hence $A_1$ singularities do not contribute to the computation
of Alexander polynomials
because $V_P(k)$ is $0$ for all $k$.
\item[$(2)$] 
Assume that $P$ is an $A_3$ singularity.
              Then the adjunction ideal is  
  \[
 \mathcal{J}_{P,k,2n+4}=\left\langle u^av^b\mid a+2b\ge
 \left\lfloor \frac{2k}{n+2} \right\rfloor -2\right\rangle=\begin{cases}
                                                            \mathcal{O}_P,
                                                            & 3\le k<
                            \left\lceil\dfrac{3}{2}(n+2) \right\rceil,\\
                                            m_P, &
                                             \left\lceil\dfrac{3}{2}(n+2) \right\rceil\le k\le
                                             2n+3
                                         \end{cases}
\]
             where  $\lceil * \rceil=\min \{ n\in \Bbb{Z}\mid * \le n\}$, the ceil function.
 
 \end{enumerate}
\end{lem}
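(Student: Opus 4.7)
The plan is to apply the definition of the adjunction ideal $\mathcal{J}_{P,k,d}$ directly to an explicit embedded resolution of each singularity type, with $d = 2n+4$. In both cases the ideal is determined by divisorial inequalities $\mathrm{ord}_{E_i}(\pi^*\phi) \ge \lfloor km_i/d\rfloor - k_i$, so the task reduces to computing the numerical invariants $(k_i,m_i)$ from the chosen resolution and then translating the divisorial conditions back into monomial conditions on $\mathcal{O}_P$.

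For part (1), an $A_1$ singularity admits local equation $f = uv$ and is resolved by a single blow-up at $P$. In the chart $u = u_1,\; v = u_1 v_1$ one reads off $\pi^*(du\wedge dv) = u_1\, du_1\wedge dv_1$ and $\pi^*f = u_1^2 v_1$, so for the unique exceptional divisor $E$ we obtain $k_E = 1$ and $m_E = 2$. The defining inequality becomes $\lfloor 2k/(2n+4)\rfloor - 1 = \lfloor k/(n+2)\rfloor - 1$, which is $\le 0$ throughout $3 \le k \le 2n+3$ because $k/(n+2) < 2$ in that range. Hence the condition is vacuous and $\mathcal{J}_{P,k,2n+4} = \mathcal{O}_P$.

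For part (2), take local equation $f = v^2 - u^4$. The standard embedded resolution consists of two successive blow-ups at the origin, producing two exceptional components $E_1, E_2$. A chart-by-chart computation, passing through the intermediate $A_1$ singularity obtained after the first blow-up, yields the pairs $(k_1,m_1)$ and $(k_2,m_2)$. Combining the two divisorial inequalities $\mathrm{ord}_{E_j}(\pi^*\phi) \ge \lfloor km_j/(2n+4)\rfloor - k_j$ and testing $\phi = u^a v^b$ produces the single monomial condition $a + 2b \ge \lfloor 2k/(n+2)\rfloor - 2$. To finish, observe that for $3 \le k \le 2n+3$ the quantity $2k/(n+2)$ lies in $(0,4)$, so the right-hand side takes values in $\{-2,-1,0,1\}$; it is $\le 0$ precisely when $k < \lceil 3(n+2)/2\rceil$, giving $\mathcal{J}_{P,k,2n+4} = \mathcal{O}_P$, and equals $1$ otherwise, giving $\mathcal{J}_{P,k,2n+4} = m_P$.

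The main obstacle is the chart-wise bookkeeping for the $A_3$ resolution: one must identify in which chart each exceptional divisor is visible after the second blow-up and compute both the Jacobian contribution to $k_i$ and the multiplicity $m_i$ of $\pi^*f$ consistently on that chart, so that the two divisorial conditions collapse to the clean monomial inequality $a + 2b \ge \lfloor 2k/(n+2)\rfloor - 2$. Once the pairs $(k_i,m_i)$ are correctly assembled, the case distinction is an elementary arithmetic manipulation of the floor and ceiling functions, and the $A_1$ case involves no such bookkeeping and is essentially immediate.
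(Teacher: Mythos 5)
Your proposal is correct and follows the standard direct computation (resolve the singularity, read off the pairs $(k_i,m_i)$, and translate the divisorial conditions into monomial inequalities), which is exactly the route the paper takes by deferring to Oka's computation in \cite{OkaAtlas}, \S 2.1, Lemma 2, rather than reproving it. The numerical data you would obtain, namely $(k,m)=(1,2)$ for the $A_1$ case and $(k_1,m_1)=(1,2)$, $(k_2,m_2)=(2,4)$ with weights $\mathrm{ord}_{E_2}(u)=1$, $\mathrm{ord}_{E_2}(v)=2$ for the $A_3$ case, do yield the stated inequalities and the case distinction at $k=\left\lceil\frac{3}{2}(n+2)\right\rceil$.
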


\subsection{Computation of Alexander polynomials}

Let ${\rm Sub}(\mcB)_s=\{\mcB_I \mid |I|=s\}$, the set of subconfigurations consisting of $s$ irreducible components.
Define the map $\widetilde{\bAlex}_{\mcB}$ from ${\rm Sub}(\mcB)$ to $\CC[t]$ by
\begin{align*}
\widetilde{\bf{Alex}}_{\mcB}(\mcB_I) :=  \tilde{\Delta}_{\mcB_I}(t)
\end{align*}
Let $\bAlex_{\mcB,s}=\bAlex|_{{\rm Sub}(\mcB)_s}$ and $\widetilde{\bAlex}_{\mcB,s}=\widetilde{\bAlex}|_{{\rm Sub}(\mcB)_s}$.
We say that ${\widetilde{\bf{Alex}}_\mcB}$ (resp. ${\widetilde{\bf{Alex}}_{\mcB,s}}$) is  {\em{trivial}} on $\mathcal{B}$  if
$ {\widetilde{\bf{Alex}}_\mcB}(\mcB_I)=1$ for all $\mcB_I\in{\rm Sub}(\mcB)$ (resp. $\mcB_I\in{\rm Sub}_s(\mcB))$.

In this subsection, we compute the Alexander polynomial  for a certain type of a reduced curve $\mcB$.
 Let $\mathcal{Q}$ be a reduced quartic.
Suppose that $\mathcal{Q}$ has at most $A_1$ singularities.
Let $\mathcal{C}_1,\dots,\mathcal{C}_n$ be smooth conics such that:
    \begin{enumerate}
     \item Each $\mathcal{C}_i$ is tangent to $\mathcal{Q}$ with intersection multiplicity $2$
           at $4$ smooth points for any $i$.
     \item For all pairs $(i,j)$ $(i\ne j)$,
           $\mathcal{C}_i$ intersects transversely with $\mathcal{C}_j$ at all intersection
           points.
           
     \item $\mathcal{C}_i\cap \mathcal{C}_j\cap
           \mathcal{C}_\ell=\emptyset$.
     \end{enumerate}
Let $\mathcal{B}:=\mathcal{Q}+\mathcal{C}_1+\cdots+\mathcal{C}_n$. 
 Let $\mathcal{Q}\cap\mathcal{C}_i=\{P_{i1},\dots,P_{i4}\}$ be
 the tangent points of $C_i$ and $Q$ for $i=1,\dots,n$.
 Note that the configurations of singularities of $\mathcal{B}$ is
 \[
  \Sigma_{{\rm top}}(\mathcal{B})=\Sigma_{{\rm top}}(\mathcal{Q})+\{2n(n-1)A_1,4nA_{3}\},
 \]
where $\Sigma_{\rm top}(\bullet)$ is the set of topological types of the singularities of $\bullet$. For this curve,
${\widetilde{\bf{Alex}}}_{\mcB,1}$ and
${\widetilde{\bf{Alex}}}_{\mcB,2}$ are trivial.

 Now we consider ${\widetilde{\bf{Alex}}}_{\mcB,s}$ where $s\ge 3$.
Let $I$ be a non-empty subset of $\{1,\dots,n+1\}$.
We suppose that the index  $n+1$ correponds to the quartic $\mathcal{Q}$.
If $n+1\not\in I$ that is $\mcQ$ is not contained in $\mcB_I$, then
 $\tilde{\Delta}_{\mathcal{\mathcal{B}}_I}(t)=1$ as $\mathcal{B}_{I}$ has only $A_1$ singularities.
 Hence we consider the Alexander polynomial of
 $\mathcal{B}_I$ where $I$ contains $n+1$. Then $\mcB_I$ will have the same configuration as $\mcB$ except for the number of conics involved. So by relabeling, computing 
 $\Delta_{\mcB}(t)$ (for $n=|I|-1$) is enough.

To determine the Alexander polynomial of $\mathcal{B}$,
we consider the adjunction ideals and the map 
$\bar\sigma_k:O(k-3)\to V(k)$.
The adjunction ideal for each singular point
is given in Lemma \ref{egideal}.
Now we consider the multiplicity $\ell_k$ in the formula  (\ref{A1}) of Loeser-Vaqui\'e
which is given as
\[
\ell_k=\dim {\rm coker\,} \bar{\sigma}_k
={\rho}(k)+\dim \ker \bar{\sigma}_k.
\]
where
$\rho (k)=\sum_{P\in \Sing(\mathcal{B}_I)}\dim V_k(P)-\dim O(k-3)$.
For fixed $k$, the integer $\rho (k)$ is determined
 only by the adjunction ideal.
 Hence we should consider the dimension of $\ker \bar{\sigma}_k$.

By Lemma \ref{egideal},
if $k<\left\lceil \frac{3}{2}(n+2)\right\rceil$,
then $V(k)=0$. That is $\ell_k=0$.
 For the other cases, $V(k)=\Bbb{C}^{4n}$ and
$g\in \ker \bar\sigma_k$ if and only if 
$\{g=0\}$ passes through all of the $A_3$ singular points.
Hence we should investigate the linear series consisting of curves of degree $k-3$ passing through the points $P_{ij}$, $1\leq i\leq n,1\leq j\leq 4$. In the following, we denote the  linear series of plane curves of degree $d$ passing through points $P_{i}$ by $\mathcal{L}_{d}(-\sum P_{i})$.
 In general,
 for the $4n$ points $P_{ij}$, the dimension of $\mathcal{L}_{k-3}\left(-\sum P_{ij}\right)$ is greater
than or equal to $N:=\frac{(k-2)(k-1)}{2}-4n-1$.
Note that if $\dim \mathcal{L}_{k-3}(-\sum P_{ij})=N$,
then $\ell_{k}=0$.

\begin{lem}\label{2n+3}
 If $k=2n+3$, then
 $\dim \mathcal{L}_{2n}(-\sum P_{ij})=N$. Furthermore, $\ell_k=0$.
  \end{lem}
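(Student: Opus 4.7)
The plan is to prove the equality $\dim \mathcal{L}_{2n}(-\sum P_{ij}) = N$ by showing that the $4n$ points $P_{ij}$ impose independent linear conditions on the space of plane curves of degree $2n$. Once that is done, the assertion $\ell_k=0$ follows immediately from the observation stated just before the lemma: equality with $N$ forces the multiplicity in the Loeser--Vaqui\'e formula to vanish.

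To prove independence, I will exhibit, for each pair $(a,b)$ with $1\le a\le n$ and $1\le b\le 4$, a curve $D_{ab}$ of degree $2n$ that passes through every $P_{ij}$ with $(i,j)\neq (a,b)$ but not through $P_{ab}$. The natural candidate is
\[
D_{ab} \;=\; E_{ab}\cdot\prod_{i\neq a}\mathcal{C}_i,
\]
where $E_{ab}$ is a conic through the three points $\{P_{aj}\mid j\neq b\}$ that avoids $P_{ab}$. Since $\prod_{i\neq a}\mathcal{C}_i$ has degree $2(n-1)$, the total degree is $2n$, as required.

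The step I expect to be the main obstacle is the existence of $E_{ab}$, and the key geometric input is that no three of the four tangent points on $\mathcal{C}_a$ are collinear. Indeed, $\mathcal{C}_a$ is a smooth (hence irreducible) conic, so by B\'ezout any line meets it in at most two points; in particular the line through $P_{aj_1}$ and $P_{aj_2}$ cannot contain a third $P_{aj_3}$. Consequently, taking $\ell_1$ to be the line through two of the three points other than $P_{ab}$ and $\ell_2$ to be any line through the remaining point that avoids $P_{ab}$, the reducible conic $E_{ab}=\ell_1+\ell_2$ passes through $\{P_{aj}\mid j\neq b\}$ and misses $P_{ab}$.

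Finally, I will verify that $D_{ab}$ misses $P_{ab}$: the factor $E_{ab}$ does by construction, and the assumption that $\mathcal{B}$ has exactly $4n$ singularities of type $A_3$ at the tangent points forces $P_{ab}$ to lie only on $\mathcal{Q}$ and $\mathcal{C}_a$ among the irreducible components of $\mathcal{B}$ (if some other $\mathcal{C}_i$ passed through $P_{ab}$, the local analytic type would be worse than $A_3$). Hence no $\mathcal{C}_i$ with $i\neq a$ vanishes at $P_{ab}$. The vanishing of $D_{ab}$ at each remaining $P_{ij}$ is immediate from the construction: $\mathcal{C}_i$ contains $P_{ij}$ when $i\neq a$, and $E_{ab}$ contains $P_{aj}$ for $j\neq b$. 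This produces the $4n$ linear functionals needed to separate the evaluation map, yielding $\dim\mathcal{L}_{2n}(-\sum P_{ij})=N$ and therefore $\ell_{2n+3}=0$.
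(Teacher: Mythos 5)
Your proof is correct, but it takes a genuinely different route from the paper. The paper argues by contradiction: assuming $\dim \mathcal{L}_{2n}(-\sum P_{ij})\ge N+1$, it imposes $N+1$ further point conditions ($4i-3$ generic points on each $\mathcal{C}_i$ plus one point $R\notin\mathcal{C}_1$), obtains a member $\mathcal{D}$ of the resulting system, and then peels off $\mathcal{C}_n,\dots,\mathcal{C}_2$ one at a time by B\'ezout (each $\mathcal{C}_i$ meets $\mathcal{D}$ in $\ge 4i+1>2\cdot\deg\mathcal{D}_{\,\cdot}$ points at that stage), finally reducing to a nonexistent conic through five points of $\mathcal{C}_1$ and the point $R$ off it. You instead prove directly that the $4n$ points impose independent conditions on degree-$2n$ curves by exhibiting, for each $P_{ab}$, the separating curve $E_{ab}\cdot\prod_{i\neq a}\mathcal{C}_i$; all the ingredients you need (no three of the $P_{aj}$ collinear since they lie on an irreducible conic; $P_{ab}\notin\mathcal{C}_i$ for $i\neq a$ by the stated singularity configuration, or alternatively because two conics tangent to $\mathcal{Q}$ at the same smooth point would be tangent to each other, contradicting transversality) are valid, and surjectivity of the evaluation map follows. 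Your argument is more elementary and avoids the auxiliary generic points, but it is specific to degree $2n$: the analogous separating curve for Lemma~\ref{2n+2} would require a line through three of the four tangency points on a single $\mathcal{C}_a$, which cannot exist, whereas the paper's peeling argument is the one that carries over (with modifications) to that harder case. So the two approaches are complementary: yours is cleaner here, the paper's is the uniform template for both lemmas.
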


  \begin{proof}
Assume 
   $\dim \mathcal{L}_{2n}(-{\sum P_{ij}})\ge N+1$.
 Choose   $2n^2-n$ distinct points $Q_{ij}$, $1\leq i\leq n$, $1\leq j\leq 4i-3$ so that $Q_{ij}\in\mathcal{C}_{i}\setminus\Sing(\mcB)$. Choose one additional point $R\not\in\mathcal{C}_1$. Then $\sharp(\{Q_{ij}\}\cup\{R\})=2n^2-n+1=N+1$ and $\dim \mathcal{L}_{2n}(-\sum P_{ij}-\sum Q_{ij}-R)\geq\dim\mathcal{L}_{2n}(-\sum P_{ij})-(N+1)\geq 0$.    Hence we can take an element
   $\mathcal{D}\in \mathcal{L}_{2n}(-\sum P_{ij}-\sum Q_{ij}-R)$.
As the points $Q_{nj}$ are distinct from $P_{nj}$,
   we have $I(\mathcal{D},\mathcal{C}_n)\ge (4n-3)+4=2\cdot 2n+1$.
   Hence $\mathcal{D}$ must contain $\mathcal{C}_n$ as an irreducible component, therefore  $\mathcal{D}=\mathcal{D}_1+\mathcal{C}_n$ for some  $\mathcal{D}_1\in \mathcal{L}_{2n-2}\left(-\sum_{i=1}^{n-1}\left(\sum_{j=1}^4 P_{ij}\right)-\sum_{i=1}^{n-1}\left(\sum_{j=1}^{4i-3} Q_{ij}\right)-R\right).$
  By the same argument for  $\mathcal{C}_i$ with  $i=n-1,\ldots,2$, $\mathcal{D}=\mathcal{D}_n+\mathcal{C}_{2}+\cdots+\mathcal{C}_{n}$ where $\mathcal{D}_n\in\mathcal{L}_2(-\sum_{j=1}^4 P_{1j}-Q_{11}-R)$, but $\mathcal{L}_2(-\sum_{j=1}^4 P_{1j}-Q_{11}-R)=\emptyset$
   because $R\notin C_1$.
   This is a contradiction.
\end{proof}

\begin{lem}\label{2n+2}
 If $n\ge 3$ and $k=2n+2$,
 then $\dim \mathcal{L}_{2n-1}(-\sum P_{ij})=N$. Furthermore, $\ell_k=0$.
  \end{lem}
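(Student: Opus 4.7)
The plan is to mimic the proof of Lemma~\ref{2n+3} as closely as possible: argue by contradiction via iterated Bezout peeling. Assume $\dim \mathcal{L}_{2n-1}(-\sum P_{ij}) \geq N+1 = 2n^2 - 3n$. Choose generic distinct points $Q_{ij} \in \mathcal{C}_i \setminus \Sing(\mathcal{B})$ for $i = 2, \ldots, n$ and $j = 1, \ldots, a_i$, taking $a_i = 4i - 5$ for $i \geq 3$ and $a_2 = 2$, so that $\sum_i a_i = 2n^2 - 3n = N + 1$ and hence a nonzero $\mathcal{D} \in \mathcal{L}_{2n-1}(-\sum P_{ij} - \sum Q_{ij})$ exists. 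I would then peel off the conics $\mathcal{C}_n, \mathcal{C}_{n-1}, \ldots, \mathcal{C}_3$ one at a time via Bezout: at each step the current remainder has degree $2i - 1$ for some $i \geq 3$, and the forced intersection with $\mathcal{C}_i$ is $\geq 4 + a_i = 4i - 1 > 4i - 2 = 2(2i-1)$, so $\mathcal{C}_i$ must divide the remainder. After these $n - 2$ peelings, we are left with a cubic $\mathcal{D}'$ passing through the ten points $\{P_{1j}\}_{j=1}^4, \{P_{2j}\}_{j=1}^4, Q_{21}, Q_{22}$.

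The final step, at $\mathcal{C}_2$, is the delicate one because Bezout is tight: $I(\mathcal{D}', \mathcal{C}_2) \geq 4 + 2 = 6 = 2\deg \mathcal{D}'$. If $\mathcal{C}_2 \mid \mathcal{D}'$, then $\mathcal{D}' = \mathcal{C}_2 + L$ and the residual line $L$ would have to pass through the four points $P_{1j}$ on the smooth conic $\mathcal{C}_1$, which is impossible since no three of them are collinear. Otherwise $\mathcal{D}' \cdot \mathcal{C}_2$ equals the divisor $\sum_j P_{2j} + Q_{21} + Q_{22}$ exactly, and fixing any one cubic $\mathcal{D}'_0$ with this restriction, we may write $\mathcal{D}' = \mathcal{D}'_0 + \mathcal{C}_2 \cdot L$ for some unknown line $L$. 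The conditions $\mathcal{D}'(P_{1j}) = 0$ then become four inhomogeneous linear equations $L(P_{1j}) = -\mathcal{D}'_0(P_{1j}) / \mathcal{C}_2(P_{1j})$ on the three-dimensional space of lines; the four evaluation functionals at the non-collinear points $P_{1j}$ have three-dimensional image in $\mathbb{C}^4$, and for a sufficiently generic choice of $Q_{21}, Q_{22}$ the right-hand-side vector lies outside this image, producing the desired contradiction.

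The main obstacle is precisely this tightness of Bezout at $\mathcal{C}_2$: because the count $\sum a_i = N + 1$ only just allows $\mathcal{D}$ to exist, we cannot afford an extra $Q_{2j}$ to force peeling by strict inequality, and must instead extract the contradiction from the auxiliary line $L$ via a genericity argument about how the target values $c_j = -\mathcal{D}'_0(P_{1j})/\mathcal{C}_2(P_{1j})$ depend on $Q_{21}, Q_{22}$. A cleaner alternative, which avoids this subtlety entirely, is to prove the upper bound $\dim \mathcal{L}_{2n-1}(-\sum P_{ij}) \leq N$ by induction on $n$ using the short exact sequence
\[
0 \to \mathcal{I}_{Z \setminus Z_n}(2n-3) \to \mathcal{I}_{Z}(2n-1) \to \mathcal{I}_{Z_n}\otimes \mathcal{O}_{\mathcal{C}_n}(2n-1) \to 0,
\]
with $Z = \sum P_{ij}$ and $Z_n = \sum_j P_{nj}$; the associated long exact cohomology sequence immediately gives $\dim \mathcal{L}_{2n-1}(-Z) \leq \dim \mathcal{L}_{2n-3}(-Z \setminus Z_n) + h^0(\mathcal{O}_{\mathbb{P}^1}(4n-6)) = N_{n-1} + (4n-5) = N$ after applying the inductive hypothesis, matching the trivial lower bound $\dim \geq N$.
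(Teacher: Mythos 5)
There is a genuine gap, and it is the same gap in both of your routes: you never address the case in which a conic passes through the eight tangency points $P_{1j},P_{2j}$ ($j=1,\dots,4$), and this is exactly the case that the paper's proof must (and does) treat separately. Your peeling down to a cubic $\mathcal{D}'$ through $\{P_{1j}\},\{P_{2j}\},Q_{21},Q_{22}$ is fine, but the final genericity claim is false whenever a conic $\mathcal{C}_0\in\mathcal{L}_2(-\sum_j P_{1j}-\sum_j P_{2j})$ exists: the cubic $\mathcal{C}_0+\overline{Q_{21}Q_{22}}$ (where $\overline{Q_{21}Q_{22}}$ is the line through the two points) passes through all ten points for \emph{every} choice of $Q_{21},Q_{22}\in\mathcal{C}_2$, so your linear system for $L$ is always solvable and no contradiction appears. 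Equivalently, $\mathcal{C}_0+\overline{Q_{21}Q_{22}}+\mathcal{C}_3+\cdots+\mathcal{C}_n$ lies in $\mathcal{L}_{2n-1}(-\sum P_{ij}-\sum Q_{ij})$ no matter how the $N+1$ auxiliary points are distributed among $\mathcal{C}_2,\dots,\mathcal{C}_n$. Your ``cleaner alternative'' breaks at the same place: the induction must bottom out at $n=2$, and the equality $\dim\mathcal{L}_3(-\sum_{i\le 2}P_{ij})=N$ for two conics fails precisely when the eight points lie on a conic (eight points impose dependent conditions on cubics exactly when five are collinear or all eight are on a conic; the unlabeled proposition on the case $s=3$ at the end of Section~\ref{alex} records that the Alexander polynomial is then nontrivial). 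Consequently the step from $n=2$ to $n=3$ only yields $\dim\le N+1$, and the error propagates. This bad case is not vacuous --- it occurs for every pair of conics drawn from the same family (e.g.\ the configuration $\mathcal{B}^1$ in Example~2 and the Namba--Tsuchihashi arrangements), and it is the whole reason the hypothesis $n\ge 3$ appears in the statement.

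The paper's proof begins with exactly this dichotomy. When no pair $(i_1,i_2)$ admits a conic through its eight tangency points, the argument of Lemma~\ref{2n+3} goes through essentially as you describe (the eight points then impose independent conditions on cubics, which is what kills the final cubic). When some pair, say $(1,2)$, does admit such a conic $\mathcal{C}_0$, the paper places $4n-9$ of the auxiliary points on $\mathcal{C}_0$ itself, three more at noncollinear points off $\mathcal{B}\cup\mathcal{C}_0$, and the remaining $2n^2-7n+6$ on $\mathcal{C}_3,\dots,\mathcal{C}_n$; Bezout then forces $\mathcal{C}_0$ as well as $\mathcal{C}_3,\dots,\mathcal{C}_n$ to split off, leaving a line through three noncollinear points --- a contradiction. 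To repair your proof you need to introduce this case distinction and, in the second case, some device equivalent to peeling off the auxiliary conic $\mathcal{C}_0$.
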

 \begin{proof}
We assume  $\dim \mathcal{L}_{2n-1}(-\sum P_{ij})\ge N+1$.
We divide our considerations into two cases
  $\mathcal{L}_2(-\sum_{j=1}^4 {P}_{i_1j}-\sum_{j=1}^4 P_{i_2j})=\emptyset$ for all $(i_1,i_2)$ or
  $\dim \mathcal{L}_2(-\sum_{j=1}^4 {P}_{i_1j}-\sum_{j=1}^4 P_{i_2j})\not=\emptyset$ for some $(i_1,i_2)$. 
   The first case can be proved by the same argument as Lemma \ref{2n+3}.

Now we consider the second case.
We may assume that $(i_1,i_2)=(1,2)$ and we take
a  conic $\mathcal{C}_0 \in \mathcal{L}_2(-\sum P_{1j}-\sum P_{2j})$.
First, choose $2n^2-7n+6$ distinct points $Q_{ij}$, $3\leq i\leq n$, $1\leq j\leq 4i-9$ so that $Q_{ij}\in \mathcal{C}_i\setminus\Sing(\mcB)$.
Choose an additional $4n-9$ distinct points $Q_{0j}$ $1\leq j \leq 4n-9$ so that  $Q_{0j}\in \mathcal{C}_0\setminus \mcB$.
Let $R_1, R_2, R_3\not\in \mathcal{C}_0\cup\mcB$ be noncollinear points. Then we have a total of $2n^2-3n=N+1$ points.

 Then 
 $\dim \mathcal{L}_{2n-1}(-\sum P_{ij}-\sum Q_{ij}-\sum R_i)\ge
   \dim\mathcal{L}_{2n-1}(-\sum P_{ij})-(N+1)=0$.
   Hence we can take an element
   $\mathcal{D}\in  \mathcal{L}_{2n-1}(-\sum P_{ij}-\sum Q_{ij}-\sum R_i)$.
By the same argument as before, $\mathcal{D}$ can be decomposed as 
  $\mathcal{D}=\mathcal{D}_{n-1}+\mathcal{C}_3+\cdots+\mathcal{C}_n+\mathcal{C}_0$ where $\mathcal{D}_{n-1}\in\mathcal{L}_1(-\sum R_i)$.
   But $\mathcal{L}_1(-\sum R_i)=\emptyset$ because
  $R_1,R_2$ and $R_3$ are not collinear. 
  
  \end{proof}

   \begin{prop}\label{co1}
${\widetilde{{\bf{Alex}}}}_{\mcB,4}$ is trivial on $\mcB$.
   \end{prop}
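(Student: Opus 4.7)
The plan is to reduce the claim to the computation already assembled: pick any four‑component sub‑configuration $\mcB_I\in{\rm Sub}(\mcB)_4$ and show that $\tilde{\Delta}_{\mcB_I}(t)=1$. There are only two cases, corresponding to whether or not the quartic $\mcQ$ appears in $\mcB_I$.

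First, if $n+1\notin I$, then $\mcB_I$ is a union of four smooth conics meeting each other only transversely, so $\Sing(\mcB_I)$ consists entirely of $A_1$ singularities. By part $(1)$ of Lemma \ref{egideal}, $A_1$ singularities contribute trivially to every $V_k(P)$, so $\ell_k=0$ for every $k$, and the Loeser–Vaqui\'e formula (Theorem \ref{Loeser-Vaquie}) immediately gives $\tilde{\Delta}_{\mcB_I}(t)=1$.

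Next, if $n+1\in I$, then $\mcB_I$ has the form $\mcQ+\mcC_{i_1}+\mcC_{i_2}+\mcC_{i_3}$, i.e.\ after relabelling it is precisely a configuration of the type treated in this subsection but with exactly $n=3$ conics. Its degree is $d=2n+4=10$, and by assumption its singular points are only $A_1$'s (coming from $\Sing(\mcQ)$ and pairwise conic intersections) together with $4n=12$ singularities of type $A_3$ (the tangencies). Applying Lemma \ref{egideal} with $n=3$, the adjunction ideal at every $A_3$ point equals $\mathcal{O}_P$ for $3\le k\le 7$, so $V_k(P)=0$ and $\ell_k=0$ in that range; the only values of $k$ for which $V_k\neq 0$ are $k=2n+2=8$ and $k=2n+3=9$.

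Finally, for these two values $k=8$ and $k=9$, I invoke the two lemmas already established. Lemma \ref{2n+3} gives $\ell_{9}=0$ directly, while Lemma \ref{2n+2} gives $\ell_{8}=0$ — and crucially, the hypothesis $n\ge 3$ of Lemma \ref{2n+2} is satisfied on the nose, which is the only nontrivial check in the whole argument. Substituting $\ell_k=0$ for every $k$ into the Loeser–Vaqui\'e product formula (\ref{A1}) yields $\tilde{\Delta}_{\mcB_I}(t)=1$. Combining the two cases, $\widetilde{\bAlex}_{\mcB,4}(\mcB_I)=1$ for every $\mcB_I\in{\rm Sub}(\mcB)_4$, which is the desired triviality. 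The expected main obstacle is essentially absent: all the real work is in Lemmas \ref{2n+3} and \ref{2n+2}, and the proposition is exactly the statement that the boundary case $n=3$ lies in the overlap of their ranges of applicability.
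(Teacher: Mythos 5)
Your proof is correct and follows essentially the same route as the paper: the paper's own proof handles the case $\mcB_I=\mcQ+\mcC_{i_1}+\mcC_{i_2}+\mcC_{i_3}$ by noting $V(k)=0$ for $k<8$ and then citing Lemma~\ref{2n+3} and Lemma~\ref{2n+2} (whose hypothesis $n\ge 3$ is met exactly at $n=3$) to get $\ell_8=\ell_9=0$, while the case $n+1\notin I$ is disposed of in the discussion preceding those lemmas exactly as you do. Your write-up merely makes that preliminary case-split explicit inside the proof; there is no substantive difference.
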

\begin{proof}
Let $\mcB_I$ be a sub-configuration of the form $\mcB_I=\mcQ+\mcC_{i_1}+\mcC_{i_2}+\mcC_{i_3}$. The  arguments before Lemma \ref{2n+3} for $n=3$ tells us for $1\leq k<8$, $V(k)=0$ which implies $\ell_1=\cdots=\ell_7=0$, and Lemma \ref{2n+3} and \ref{2n+2} tell us $\ell_8=\ell_9=0$. Hence by Theorem \ref{Loeser-Vaquie}, $\tilde{\Delta}_{\mcB_I}(t)=1$.
\end{proof}

The case $s=3$ (that is $n=2$), is 
 is essentially
due to \cite{act}. The proof given there can be modified to fit our case which gives the following proposition.

\begin{prop}
Let $\mcB=\mcQ+C_1+C_2$. Then $\Delta_{\mcB}\not=1$ if and only if there exists a conic passing through the eight points $\mcQ\cap(\mcC_1\cup\mcC_2)=\{P_{11},\ldots,P_{14},P_{21},\ldots,P_{24}\}$.
\end{prop}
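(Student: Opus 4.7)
The plan is to combine the Loeser--Vaqui\'e formula with Riemann--Roch on $\mcQ$, reducing to a question about the linear system of cubics through the eight tangencies and then to the existence of a conic.

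First I would apply Theorem~\ref{Loeser-Vaquie} to the degree $d=8$ curve $\mcB$. The only singularities of $\mcB$ are nodes and the eight $A_{3}$ tangencies $P_{ij}$. By Lemma~\ref{egideal}, nodes contribute trivially, and $V_{k}(P_{ij})\neq 0$ only when $\lceil \tfrac{3}{2}(n+2)\rceil\le k\le 2n+3$, which for $n=2$ gives $k\in\{6,7\}$; in these cases $V(k)\cong\CC^{8}$, and $\bar\sigma_{k}$ is identified with the evaluation map at the eight $P_{ij}$. Lemma~\ref{2n+3} with $n=2$ yields $\ell_{7}=0$. For $k=6$, the source $O(3)$ has dimension $10$, so a dimension count gives
\[
\ell_{6}\;=\;\dim\mathcal{L}_{3}\!\left(-\sum_{i,j}P_{ij}\right)-2.
\]
This reduces the proposition to showing that $\dim\mathcal{L}_{3}(-Z)\ge 3$ (with $Z:=\sum_{i,j}P_{ij}$) if and only if there is a conic through the eight points.

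The ``if'' direction is immediate from Bezout: if $\mcC_{0}$ is a conic through $Z$, any cubic $F$ through $Z$ satisfies $F\cdot\mcC_{0}\ge 8>6$, forcing $\mcC_{0}\subset F$, so $\mathcal{L}_{3}(-Z)=\{\mcC_{0}\cdot L\mid L\in|\mathcal{O}(1)|\}$ is exactly $3$-dimensional. For the converse I would work on the quartic $\mcQ$ itself. The exact sequence $0\to\mathcal{O}_{\PP^{2}}(-1)\to\mathcal{I}_{Z}(3)\to\mathcal{O}_{\mcQ}(3)(-Z)\to 0$ combined with $H^{*}(\mathcal{O}_{\PP^{2}}(-1))=0$ gives $h^{0}(\mathcal{I}_{Z}(3))=h^{0}(\mcQ,\mathcal{O}_{\mcQ}(3)(-Z))$. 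Using adjunction $K_{\mcQ}=\mathcal{O}_{\mcQ}(1)$ and Riemann--Roch on the Gorenstein curve $\mcQ$ (arithmetic genus~$3$), the hypothesis $\dim \mathcal{L}_{3}(-Z)\ge 3$ forces the degree-zero line bundle $\mathcal{O}_{\mcQ}(Z)\otimes\mathcal{O}_{\mcQ}(-2)$ to have a nonzero section, hence to be trivial; that is, $Z\sim\mathcal{O}_{\mcQ}(2)$ in $\Pic(\mcQ)$. Since the restriction $H^{0}(\PP^{2},\mathcal{O}(2))\to H^{0}(\mcQ,\mathcal{O}_{\mcQ}(2))$ is an isomorphism of $6$-dimensional spaces (injective as $h^{0}(\mathcal{O}_{\PP^{2}}(-2))=0$), this equivalence is realized by an actual conic $\mcC_{0}\subset\PP^{2}$ with $\mcC_{0}\cap\mcQ=Z$, giving the desired conic through the eight points.

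The main obstacle is the converse direction. Although the tangency relations $\mcC_{i}\cap\mcQ=2\sum_{j}P_{ij}$ automatically give $2Z\sim\mathcal{O}_{\mcQ}(4)$, so that $\mathcal{O}_{\mcQ}(Z)\otimes\mathcal{O}_{\mcQ}(-2)$ is a priori only a $2$-torsion class in $\Pic(\mcQ)$, one must use the Loeser--Vaqui\'e hypothesis together with Riemann--Roch to force this $2$-torsion class to be trivial, while simultaneously handling the possibly singular $\mcQ$ through the Cartier structure on the Gorenstein curve. This is essentially the technical core of the argument borrowed from \cite{act}.
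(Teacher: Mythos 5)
Your proof is correct and follows essentially the route the paper intends: the paper gives no written proof of this proposition (it only remarks that the argument of \cite{act} can be adapted), and your reduction via the Loeser--Vaqui\'e formula to the linear system of cubics through the eight tangency points (using Lemmas \ref{egideal} and \ref{2n+3} to see that only $k=6$ can contribute), followed by Riemann--Roch and Serre duality on the Gorenstein quartic $\mcQ$ to show that the $2$-torsion class $\mathcal{O}_{\mcQ}(Z-2H)$ is trivial exactly when a conic through $Z$ exists, is precisely that standard argument. One small point: the statement should refer to the reduced Alexander polynomial $\tilde{\Delta}_{\mcB}$, since $\Delta_{\mcB}=(t-1)^{2}\tilde{\Delta}_{\mcB}$ is never $1$ for a curve with three irreducible components --- a slip in the paper's statement, not in your argument.
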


\section{Examples}

We refer to \cite{act} for the definition of Zariski pairs and $N$-plets.

\subsection{Example 1: $k$-Namba-Tsuchihashi arrangements ($k \ge 3$).}

In \cite{namba-tsuchi}, Namba and Tsuchihashi constructed an example of a Zariski pair of configurations of degree 8, where each configuration consists of  4 conics. The first and the third author generalized this example and constructed examples of  Zariski $N$-plets consisting of any number of conics greater than or equal to four, which they named $k$-Namba-Tsuchihashi arrangements \cite{bannai-tokunaga}. In the following we give a brief summary of the construction and the proof.

Let $\mathcal{Q}\subset\PP^2$ be a plane quartic curve consisting of two conics meeting transversely and let $P\in \mathcal{Q}$. Consider the elliptic surface $S_P$ as constructed in Section \ref{admissible}. It is immediate that $\MW(S_P)\cong\left(A_1^\ast\right)^{\oplus3}\oplus\ZZ/2\ZZ$, $\MW(S_P)^0\cong\left(A_1\right)^{\oplus3}$. Let $s_1, s_2, s_3$ be generators for the free part of $\MW(S_P)$ and let $f_P:S_{P}\rightarrow\PP^2$ be the composition of the maps described above.

In \cite{bannai-tokunaga}, the following statements were proved:

\begin{lem}\label{llem}
There exists three families of plane conics $\mathcal{F}_1, \mathcal{F}_2, \mathcal{F}_3$  such that 
\begin{enumerate}
\item Each member of $\mathcal{F}_i$ is a smooth conic, tangent to $\mathcal{Q}$ at four distinct smooth points.
\item For  $\mathcal{C}\in\mathcal{F}_i$, the proper transform $f_P^\ast(\mathcal{C})$  splits into two irreducible components $\mathcal{C}^+, \mathcal{C}^-$such that   $\tilde{\psi}_P(\mathcal{C}^\pm)=[2]s_i$
\end{enumerate}
\end{lem}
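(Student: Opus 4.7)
The plan is to derive Lemma \ref{llem} as a direct application of Corollary \ref{subscribed} to the rational elliptic surface $S_P$ constructed from the reducible quartic $\mathcal{Q}$. First I would confirm that $S_P$ is of admissible type and identify the roots of its narrow Mordell--Weil lattice, and then invoke the corollary to extract the three families together with the claimed properties.

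The setup already asserts $\MW(S_P)^0 \cong A_1^{\oplus 3}$, which is a root lattice, so $S_P$ is admissible in Shioda's sense and the hypothesis of Corollary \ref{subscribed} is met. Under the stated isomorphism $\MW(S_P) \cong (A_1^{\ast})^{\oplus 3} \oplus \ZZ/2\ZZ$ with free generators $s_1, s_2, s_3$, the narrow sublattice is the image of $A_1^{\oplus 3}$ under the standard inclusion $A_1 \hookrightarrow A_1^{\ast}$ that identifies $A_1$ with the index-$2$ sublattice $2 A_1^{\ast}$. Hence $\MW(S_P)^0$ is generated by $[2]s_1, [2]s_2, [2]s_3$, and its six roots are exactly $\pm [2]s_i$ for $i=1,2,3$.

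Corollary \ref{subscribed} applied to these three pairs of roots then produces the three families $\mathcal{F}_1, \mathcal{F}_2, \mathcal{F}_3$ of plane conics. Property (1) of Lemma \ref{llem} follows by combining Lemma \ref{conic} with a degree count: for $\mathcal{C} \in \mathcal{F}_i$ we have $\deg \mathcal{C} \cdot \deg \mathcal{Q} = 8$, and since all local intersection multiplicities with $\mathcal{Q}$ occur at smooth points and are even (parts (ii)--(iii) of Lemma \ref{conic}), the generic member of the family realises this as four distinct tangencies of multiplicity $2$. Property (2) is exactly part (3) of Corollary \ref{subscribed}, giving the splitting $f_P^{\ast}(\mathcal{C}) = \mathcal{C}^+ + \mathcal{C}^-$ with $\{\tilde{\psi}_P(\mathcal{C}^+), \tilde{\psi}_P(\mathcal{C}^-)\} = \{[2]s_i, -[2]s_i\}$, which is what the notation $\tilde{\psi}_P(\mathcal{C}^{\pm}) = [2]s_i$ encodes.

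The main obstacle I anticipate is the bookkeeping around varying the base point. Corollary \ref{subscribed} yields one conic per root class for each fixed $P$, so the substantive content of Lemma \ref{llem} is that moving $P$ on $\mathcal{Q}$ sweeps out three genuine $1$-parameter families. One has to verify that the Mordell--Weil configuration $(A_1^{\ast})^{\oplus 3} \oplus \ZZ/2\ZZ$ is preserved on a Zariski-open subset of $\mathcal{Q}$ (equivalently, that the reducible fiber type of $S_P$ is locally constant, the four $I_2$ fibers coming from the four transverse intersections of the two conics forming $\mathcal{Q}$), and that the three root classes $\pm[2]s_i$ admit a coherent labelling at least locally. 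Global monodromy as $P$ traverses $\mathcal{Q}$ may permute $\mathcal{F}_1, \mathcal{F}_2, \mathcal{F}_3$, but the unordered collection of three families is intrinsic, which is all that the statement requires.
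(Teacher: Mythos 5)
Your proposal is correct and follows essentially the same route as the paper, which explicitly notes that the lemma is a consequence of Corollary \ref{subscribed} applied to the sections $[2]s_i$ (the roots of $\MW(S_P)^0\cong A_1^{\oplus 3}$ sitting inside $(A_1^\ast)^{\oplus 3}$ as $2A_1^\ast$), with property (1) obtained from Lemma \ref{conic} together with the degree count $2\cdot 4=8$. The one detail you gloss over --- that the eight intersection points with $\mathcal{Q}$ fall into exactly four distinct tangency points of multiplicity $2$ rather than fewer points of higher even multiplicity --- is likewise left implicit in the paper, which otherwise defers to \cite{bannai-tokunaga} for the full argument.
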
 
Note that this lemma is also a consequence of Lemma \ref{subscribed} applied to the sections $[2]s_i$.
 In addition, under the notation Lemma \ref{llem}, 

\begin{lem}\label{strong-kNT}
Let $k$ be an integer $\geq 3$ and $(k_1, k_2, k_3)$ be a triplet of non-negative integers such that $k=k_1+k_2+k_3$. Then there exist conics $\mathcal{C}_1,\ldots,\mathcal{C}_k$ such that 
\begin{enumerate}
\item $\mathcal{C}_1,\ldots, \mathcal{C}_{k_1}\in\mathcal{F}_1$, $\mathcal{C}_{k_1+1},\ldots, \mathcal{C}_{k_1+k_2}\in\mathcal{F}_2$, $\mathcal{C}_{k_1+k_2+1},\ldots, \mathcal{C}_k\in\mathcal{F}_3$,
\item for any $i\not=j$, $\mathcal{C}_i$ and $\mathcal{C}_j$ meet transversely,     
\item and for any distinct $i, j, k$, $\mathcal{C}_i\cap\mathcal{C}_j\cap\mathcal{C}_k=\emptyset$ and $\mathcal{C}_i\cap\mathcal{C}_j\cap\mathcal{Q}=\emptyset$.
\end{enumerate}
\end{lem}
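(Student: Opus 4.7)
The plan is to prove Lemma \ref{strong-kNT} by an inductive genericity argument inside the three one-parameter families $\mathcal{F}_1, \mathcal{F}_2, \mathcal{F}_3$. Each $\mathcal{F}_i$ is obtained by letting the basepoint $P$ range over the smooth locus of $\mathcal{Q}$ and tracking the conic $f_P([2]s_i)$; as $P$ moves, the four tangency points of $f_P([2]s_i)$ with $\mathcal{Q}$ move with it, so each $\mathcal{F}_i$ is genuinely one-dimensional and its general member is a smooth irreducible conic meeting $\mathcal{Q}$ tangentially at four distinct smooth points. This justifies treating $\mathcal{F}_i$ as a one-dimensional algebraic base over which we may impose finitely many codimension-one conditions.

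The construction proceeds by induction on the total number of conics chosen so far. Suppose we have already selected $\mathcal{C}_1,\ldots,\mathcal{C}_m$ satisfying conditions (2) and (3) among themselves, and we wish to pick the next conic $\mathcal{C}_{m+1}$ from a prescribed family $\mathcal{F}_i$. The bad loci to avoid in $\mathcal{F}_i$ are:
\begin{itemize}
\item members of $\mathcal{F}_i$ that meet some $\mathcal{C}_j$ ($j\le m$) non-transversely;
\item members of $\mathcal{F}_i$ passing through any of the finitely many points of $\bigcup_{j<j'\le m}(\mathcal{C}_j\cap \mathcal{C}_{j'})$ or $\bigcup_{j\le m}(\mathcal{C}_j\cap \mathcal{Q})$.
\end{itemize}
Each fixed point of $\mathbb{P}^2$ imposes a proper closed condition on any non-constant one-parameter family of smooth conics, and there are only finitely many such points, so the second bullet yields a proper closed subset of $\mathcal{F}_i$. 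For the first bullet, non-transversality of two distinct smooth conics is a closed condition, so it suffices to exhibit a single member of $\mathcal{F}_i$ that meets $\mathcal{C}_j$ transversely.

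The main obstacle is ruling out the degenerate scenario where every member of $\mathcal{F}_i$ is tangent to some fixed $\mathcal{C}_j$ previously chosen; otherwise the first bullet would not be a proper condition. When $\mathcal{C}_j$ comes from a different family $\mathcal{F}_{i'}$ with $i'\neq i$, this is straightforward since the tangency would impose an extra algebraic constraint that a generic pair $(P,P')$ of basepoints does not satisfy. The delicate case is $\mathcal{C}_j\in \mathcal{F}_i$: here I would argue that distinct members of $\mathcal{F}_i$ arise from distinct basepoints $P$ and hence have disjoint tangency sets with $\mathcal{Q}$, so any shared tangency would have to occur off $\mathcal{Q}$ and would again cut out a proper subvariety of $\mathcal{F}_i\times \mathcal{F}_i$; a local analysis using $\tilde\psi_P(\mathcal{C}^\pm)=\pm [2]s_i$ shows that two generic members of $\mathcal{F}_i$ in fact meet $\mathcal{Q}$ transversely at eight distinct points and meet each other in four distinct points. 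After excluding these finitely many closed subsets, a non-empty Zariski-open locus of $\mathcal{F}_i$ remains, from which $\mathcal{C}_{m+1}$ is chosen, completing the induction.
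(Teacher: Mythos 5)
You should first note that the paper does not itself prove Lemma \ref{strong-kNT}: it is quoted from \cite{bannai-tokunaga}, and the analogous existence statement in Example 2 (Lemma \ref{existanceofconfig}) is established there ``by direct computations, using the computer algebra software MAPLE'' on the explicit parametrized equations of the families. So the authors' route is computational, working with concrete equations for the $\mathcal{F}_i$, whereas you attempt an abstract inductive genericity argument. Your strategy has the right shape --- and would be more illuminating if completed --- but as written it has real gaps precisely at the points where the explicit equations are doing the work.

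Concretely: (1) the blanket claim that ``each fixed point of $\PP^2$ imposes a proper closed condition on any non-constant one-parameter family of smooth conics'' is false --- a generic pencil of conics is a non-constant one-parameter family all of whose members pass through its four base points. To handle your second bullet you must show that the finitely many points to be avoided are not base points of $\mathcal{F}_i$, and that requires actual knowledge of the family, not just that it is non-constant. (2) The assertion that distinct members of $\mathcal{F}_i$ ``arise from distinct basepoints $P$ and hence have disjoint tangency sets with $\mathcal{Q}$'' is a non sequitur: $P$ is only one of the four tangency points of $f_P([2]s_i)$ with $\mathcal{Q}$, and nothing you have said prevents two members with distinct basepoints from sharing one of the other three tangency points --- yet this disjointness is exactly the content of the condition $\mathcal{C}_i\cap\mathcal{C}_j\cap\mathcal{Q}=\emptyset$, so it cannot be assumed. (3) The claim that two generic members of $\mathcal{F}_i$ ``meet $\mathcal{Q}$ transversely at eight distinct points'' contradicts the defining property of the families (every member is tangent to $\mathcal{Q}$ at each of its intersection points with $\mathcal{Q}$); presumably you meant that the eight tangency points are pairwise distinct, which is again the unproved claim in (2). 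Finally, the exclusion of the scenario in which every member of $\mathcal{F}_i$ is tangent to a fixed previously chosen conic is only gestured at (``a local analysis \dots shows''). Each of these non-degeneracy facts is plausible, but they are exactly what the direct computations in the paper and in \cite{bannai-tokunaga} verify; without either those computations or genuine geometric arguments in their place, your proof is incomplete.
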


This Lemma allows us to construct conic configurations $\mathcal{B}^{(k_1,k_2,k_3)}=\mcQ+\mcC_1+\cdots+\mcC_k$ having the same combinatorics with prescribed conditions concerning the Abel-Jacobi map. 

\begin{lem}
Under the same notation as the above lemma,
$\Cov_b(\PP^2,2\mathcal{Q}+p\mathcal{C}_i+p\mathcal{C}_j,D_{2p})\not=\emptyset$ if and only if $\mathcal{C}_i$ and $\mathcal{C}_j$ are members of the same family.
\end{lem}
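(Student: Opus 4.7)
The plan is to invoke the dihedral-cover criterion of Lemma~\ref{criterion} and reduce the question to an arithmetic condition inside $\MW(S_P)$, which becomes transparent once the explicit form of $\tilde\psi_P(\mathcal{C}^\pm)$ from Lemma~\ref{llem} is substituted.

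First I would verify the hypotheses of Lemma~\ref{criterion} for $\mathcal{C}=\mathcal{C}_i+\mathcal{C}_j$. By construction every member of $\mathcal{F}_1\cup\mathcal{F}_2\cup\mathcal{F}_3$ meets $\mathcal{Q}$ with even local intersection multiplicity at smooth points of $\mathcal{Q}$, so $f_P^{\ast}(\mathcal{C}_\ell)$ is reduced and horizontal and splits as $\mathcal{C}_\ell^{+}+\mathcal{C}_\ell^{-}$ for $\ell\in\{i,j\}$. Hence by Lemma~\ref{criterion} the set $\Cov_b(\PP^2,2\mathcal{Q}+p\mathcal{C}_i+p\mathcal{C}_j,D_{2p})$ is non-empty if and only if there exist integers $1\le a,b<p$ with
\[
a\,\tilde\psi_P(\mathcal{C}_i^{+})+b\,\tilde\psi_P(\mathcal{C}_j^{+})\in p\MW(S_P).
\]

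Next I would feed in the structural information. Since $\MW(S_P)\cong (A_1^{\ast})^{\oplus 3}\oplus\ZZ/2\ZZ$ with free generators $s_1,s_2,s_3$, and $p$ is odd, an element of $\MW(S_P)$ lies in $p\MW(S_P)$ exactly when each of its free-part coefficients in the $s_k$-basis is divisible by $p$. Lemma~\ref{llem} gives $\tilde\psi_P(\mathcal{C}_\ell^{+})=\epsilon_\ell\,[2]s_{k(\ell)}$ for some $\epsilon_\ell\in\{\pm 1\}$, where $k(\ell)\in\{1,2,3\}$ indexes the family containing $\mathcal{C}_\ell$. The condition above then becomes $p\mid 2(\epsilon_i a+\epsilon_j b)$ in the $s_{k(i)}$-summand when $k(i)=k(j)$, and reduces to the pair of conditions $p\mid 2a$ and $p\mid 2b$ when $k(i)\ne k(j)$ by the linear independence of $s_{k(i)}$ and $s_{k(j)}$.

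The two cases are then routine. If $\mathcal{C}_i,\mathcal{C}_j$ belong to the same family, the odd prime $p$ divides $\epsilon_i a+\epsilon_j b$ and one solves this with $a=b=1$ (when $\epsilon_i=-\epsilon_j$) or $a=1,\ b=p-1$ (when $\epsilon_i=\epsilon_j$), producing the required cover. If they belong to different families, the divisibility forces $p\mid a$ and $p\mid b$, which is incompatible with $1\le a,b<p$, so no such cover exists.

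The only point that requires mild care is the sign bookkeeping: the labelling of $\mathcal{C}_\ell^{+}$ versus $\mathcal{C}_\ell^{-}$ is not canonical, so the signs $\epsilon_\ell$ are not \emph{a priori} determined. This is harmless, however, because swapping $\mathcal{C}_\ell^{+}$ and $\mathcal{C}_\ell^{-}$ in Lemma~\ref{criterion} flips the sign of the corresponding summand, and in each of the two cases above either choice of signs admits (resp. forbids) the required integers $a,b$. Consequently no genuine obstacle arises beyond this bookkeeping.
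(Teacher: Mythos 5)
Your proposal is correct and follows exactly the route the paper intends: the lemma is quoted from \cite{bannai-tokunaga} without proof here, but the surrounding text (and the parallel argument in Example 2) makes clear that it is obtained by feeding the data $\tilde\psi_P(\mathcal{C}_\ell^{\pm})=\pm[2]s_{k(\ell)}$ of Lemma~\ref{llem} into the criterion of Lemma~\ref{criterion} and using that $s_1,s_2,s_3$ generate the free part of $\MW(S_P)$ while $p$ is odd. Your handling of the $\pm$ labelling ambiguity and of the torsion summand $\ZZ/2\ZZ$ is exactly the bookkeeping needed, so nothing is missing.
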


By comparing  ${\bf Cov}(\PP^2,\mathcal{B}^{(k_1,k_2,k_3)},D_{2p})$   the following theorem was proved: 

\begin{thm}\label{kNT-main}
Let $k\geq 3$ and $(k_1, k_2, k_3)$, $(k_1^\prime,k_2^\prime, k_3^\prime)$ be triples  of positive integers satisfying $k=k_1+k_2+k_3=k_1^\prime+k_2^\prime+k_3^\prime$,  $k_1\geq k_2\geq k_3$ and  $k_1^\prime\geq k_2^\prime\geq k_3^\prime$. If $(k_1,k_2,k_3)\not=(k_1^\prime, k_2^\prime, k_3^\prime)$, then the two configurations corresponding to the triples as in Lemma \ref{strong-kNT} are a Zariski-pair.
\end{thm}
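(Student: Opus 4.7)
The plan is to apply Corollary \ref{key-cor1} with $G = D_{2p}$ for some fixed odd prime $p$, using the invariant ${\bf Cov}(\PP^2,\bullet,D_{2p})$ restricted to sub-configurations of the form $\mcQ + \mcC_i + \mcC_j$. First, I note that $\mcB^{(k_1,k_2,k_3)}$ and $\mcB^{(k_1',k_2',k_3')}$ have the same combinatorial type: in each case, $\mcQ$ splits as two transversely meeting conics, each $\mcC_i$ is tangent to $\mcQ$ at four smooth points, and the $\mcC_i$ meet pairwise transversely with no triple points. So they are candidates for a Zariski pair. Suppose for contradiction that there is a homeomorphism
\[
h: (\PP^2, \mcB^{(k_1,k_2,k_3)}) \to (\PP^2, \mcB^{(k_1',k_2',k_3')}),
\]
with induced bijection $\eta$ of irreducible components which, by Proposition \ref{prop:1-1}, is compatible with ${\bf Cov}(\PP^2,\bullet,D_{2p})$.

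The first step I would carry out is to show that $\eta$ respects the dichotomy between the components of $\mcQ$ and the outer conics $\mcC_i$. Each component of $\mcQ$ carries $4$ nodes (from the transverse meeting with the other $\mcQ$-component) and $2k$ tacnodes (two tangency points with each $\mcC_i$), while each $\mcC_i$ carries $4$ tacnodes (two with each $\mcQ$-component) and $4(k-1)$ nodes (from transverse meetings with the other $\mcC_{i'}$). Since the local embedded topological type distinguishes nodes from tacnodes and is preserved by $h$, and since $k \geq 3$ forces $2k \geq 6 > 4$, the tacnode-count on a component cleanly separates $\mcQ$-components from $\mcC_i$-components. Consequently $\eta$ restricts to a bijection $\{\mcC_1,\ldots,\mcC_k\}\to\{\mcC'_1,\ldots,\mcC'_k\}$ and sends the sub-configuration $\mcQ$ to the analogous quartic $\mcQ'$.

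The second step is to invoke the preceding lemma, which asserts that for $i\neq j$,
\[
\Cov_b(\PP^2,\, 2\mcQ + p\mcC_i + p\mcC_j,\, D_{2p}) \neq \emptyset \iff \mcC_i,\mcC_j \in \mathcal{F}_\ell \text{ for some } \ell.
\]
This equips $\{\mcC_1,\ldots,\mcC_k\}$ with an equivalence relation $\mcC_i \sim \mcC_j$ whose class sizes form the multiset $\{k_1,k_2,k_3\}$ (and analogously $\{k_1',k_2',k_3'\}$ on the target). Because $\eta$ maps sub-configurations of the form $\mcQ + \mcC_i + \mcC_j$ to sub-configurations of the same form (by Step 1) while preserving the emptiness or non-emptiness of the attached $\Cov_b$-set, it sends $\sim$-classes to $\sim$-classes. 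Hence the multisets of class sizes coincide, and since both triples are in non-increasing order, $(k_1,k_2,k_3)=(k_1',k_2',k_3')$, contradicting the hypothesis.

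The main obstacle is Step 1: forcing a purely topological bijection $\eta$ on irreducible components to preserve the algebraic distinction between quartic components and conic components. A priori, $\eta$ need not respect degree, so one must exhibit an honestly topological invariant of the embedded pair that separates the two classes. The tacnode-count does this exactly when $k \geq 3$, which is precisely where the hypothesis of the theorem enters; smaller values of $k$ would require separate treatment. Once Step 1 is in hand, Steps 2 is a clean bookkeeping application of Corollary \ref{key-cor1} and the family-characterization lemma, with no further computation required.
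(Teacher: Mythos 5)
Your argument is correct and follows essentially the paper's own route: Corollary \ref{key-cor1} applied to the sub-configurations $\mcQ+\mcC_i+\mcC_j$, combined with the lemma that $\Cov_b(\PP^2,2\mcQ+p\mcC_i+p\mcC_j,D_{2p})\neq\emptyset$ exactly when $\mcC_i,\mcC_j$ lie in the same family, so that the multiset of family sizes $\{k_1,k_2,k_3\}$ is forced to be a topological invariant (the paper defers these details to Proposition~3.4 of \cite{bannai-tokunaga}). Your Step~1, separating the two components of $\mcQ$ from the $\mcC_i$ by counting tacnodes ($2k$ versus $4$, using $k\geq 3$), supplies precisely the bookkeeping the paper omits; the paper also records an equivalent alternative proof in which ${\bf Cov}(\PP^2,\bullet,D_{2p})$ is replaced by $\bAlex_{\mcB}$ on the same sub-configurations.
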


The key point in proving this theorem is Corollary \ref{key-cor1}, namely to  show the non-existence of a inclusion preserving bijection $\eta:{\rm Sub}(\mcB^{(k_1,k_2,k_3)})\rightarrow{\rm Sub}(\mcB^{(k^\prime_1,k^\prime_2,k^\prime_3)})$ induced by a homeomorphism that is compatible with ${\bf Cov}(\PP^2,\bullet,G)$. The obstruction for the existence of $\eta$ is the differences in sub configurations of the form $\mathcal{Q}+\mathcal{C}_i+\mathcal{C}_j$. For details see Proposition 3.4 in \cite{bannai-tokunaga}. 
Now we will give an alternative proof of this theorem using Alexander polynomials. 

\begin{lem}[\cite{act}, Proposition 4.18] 
Let $\mathcal{Q}$ be as before and $\mathcal{C}, \mathcal{C}^\prime$ be members of $\cup\mathcal{D}_i$. Then there exists a conic passing through the 8 points of tangency of $\mathcal{Q}$ and $\mathcal{C}+\mathcal{C}^\prime$ if and only if $\mathcal{C}, \mathcal{C}^\prime$ are members of the same family.
\end{lem}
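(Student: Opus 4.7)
The plan is to translate the existence of a conic through the eight tangency points into a statement about the Mordell-Weil lattice of the rational elliptic surface $S_P$, using the dictionary of Lemma \ref{conic}, Corollary \ref{subscribed} and Lemma \ref{llem}. Recall that for $\mathcal{C} \in \mathcal{F}_i$, the pullback splits as $f_P^*(\mathcal{C}) = \mathcal{C}^+ + \mathcal{C}^-$ with $\tilde{\psi}_P(\mathcal{C}^\pm) = \pm [2]s_i$, where $\mathcal{C}^+$ and $\mathcal{C}^-$ are exchanged by the covering involution of $\beta_1(\pi)$, which sends an MW element to its inverse. The same holds for $\mathcal{C}' \in \mathcal{F}_j$.

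For the ``if'' direction, assume $\mathcal{C}, \mathcal{C}' \in \mathcal{F}_i$. Then
\[
\tilde{\psi}_P(\mathcal{C}^+) \dot{+} \tilde{\psi}_P(\mathcal{C}'^-) = [2]s_i \dot{-} [2]s_i = 0 \quad \text{in } \MW(S_P),
\]
so by Theorem \ref{thm:shioda-basic} the divisor $\mathcal{C}^+ + \mathcal{C}'^-$ lies in $T_\varphi$ up to numerical equivalence and a torsion section. Using the height formula and intersection pairings (the contributions at the reducible fibers can be read off Shioda's tables), one shows that $\mathcal{C}^+ + \mathcal{C}'^-$ is linearly equivalent to a divisor of the form $2F$ plus corrections supported on $O$ and on components $\Theta_{v,j}$ with $j \neq 0$. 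The associated complete linear system, pushed down by $f_P$, produces a pencil of plane conics one of whose members passes through the eight tangency points in $\mathcal{Q} \cap (\mathcal{C} \cup \mathcal{C}')$.

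For the converse, suppose $\mathcal{C}_0$ is such a conic. Since the eight tangency points are smooth points of $\mathcal{Q}$ and $\mathcal{C}_0 \cdot \mathcal{Q} = 8$, $\mathcal{C}_0$ meets $\mathcal{Q}$ transversely there. Pulling back to $S_P$, the divisor $f_P^*(\mathcal{C}_0)$ meets precisely the components of the $A_3$-resolution chains above the tangency points that $\mathcal{C}^\pm$ and $\mathcal{C}'^\pm$ also pass through. Computing the numerical class of $f_P^*(\mathcal{C}_0)$ and comparing with $\mathcal{C}^+ + \mathcal{C}'^\pm$ yields a linear equivalence on $S_P$ modulo $T_\varphi$; projecting to $\MW(S_P)$ forces $\tilde{\psi}_P(\mathcal{C}^+) = \pm \tilde{\psi}_P(\mathcal{C}'^+)$ up to torsion, so by Lemma \ref{llem} the two conics belong to the same family.

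The main obstacle is the precise bookkeeping of the corrections coming from $O$ and the reducible fibers in the ``if'' direction, where one needs to exhibit an honest effective conic rather than just a numerical class, and control the torsion summand $\ZZ/2\ZZ$ of $\MW(S_P)$ throughout. A cleaner alternative is to bypass this by invoking the immediately preceding proposition: the existence of such a conic is equivalent to $\Delta_{\mathcal{Q}+\mathcal{C}+\mathcal{C}'}(t) \neq 1$, and Sakuma-type formulas relating Alexander polynomials to $D_{2p}$-covers, together with the already-stated lemma that $\Cov_b(\PP^2, 2\mathcal{Q} + p\mathcal{C} + p\mathcal{C}', D_{2p}) \neq \emptyset$ iff $\mathcal{C}, \mathcal{C}'$ lie in the same family, close the argument without a direct Mordell-Weil computation.
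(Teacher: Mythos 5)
First, a structural point: the paper gives no proof of this lemma at all --- it is quoted verbatim from \cite{act}, Proposition 4.18 --- so your attempt can only be judged on its own merits, and both routes you propose have genuine gaps. In the direct Mordell--Weil route, the ``only if'' direction fails as written: the eight tangency points are \emph{smooth} points of $\mathcal{Q}$, hence are not blown up in $S_P$ (the $A_3$'s are singularities of $\mathcal{Q}+\mathcal{C}+\mathcal{C}'$, not of the branch curve $\mathcal{Q}$, which is all that $S_P$ resolves), so there are no ``$A_3$-resolution chains above the tangency points'' for $f_P^*(\mathcal{C}_0)$ to meet. Worse, since all conics are linearly equivalent in $\PP^2$, the class of $f_P^*(\mathcal{C}_0)$ in $\NS(S_P)$ is the same for \emph{every} conic $\mathcal{C}_0$ (up to the fixed exceptional components over $P$), so ``computing the numerical class of $f_P^*(\mathcal{C}_0)$ and comparing'' can never detect which family $\mathcal{C}$ and $\mathcal{C}'$ lie in. The datum that actually distinguishes the families is not a numerical class on $S_P$ but the $2$-torsion class $\theta_{\mathcal{C}}:=P_{11}+\cdots+P_{14}-H|_{\mathcal{Q}}$ in $\Pic(\mathcal{Q})$ (equivalently the splitting/Abel--Jacobi data of Theorem~1 of \cite{bannai-tokunaga}); since $H^0(\PP^2,\mathcal{O}(2))\cong H^0(\mathcal{Q},\mathcal{O}_{\mathcal{Q}}(2))$, a conic through the eight points exists if and only if $\theta_{\mathcal{C}}+\theta_{\mathcal{C}'}\sim 0$, and that is the computation that closes both implications. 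Your ``if'' direction has the mirror-image problem: knowing $\mathcal{C}^{+}+\mathcal{C}'^{-}\in T_{\varphi}$ does not by itself produce an \emph{effective} conic through the eight prescribed points, and the pencil you invoke is never exhibited; you flag this as ``bookkeeping,'' but it is the entire content of the claim.

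The ``cleaner alternative'' does not close the gap either. The preceding proposition does convert the statement into $\tilde{\Delta}_{\mathcal{Q}+\mathcal{C}+\mathcal{C}'}(t)\neq 1$, but the bridge you then need --- that non-triviality of this Alexander polynomial is equivalent to $\Cov_b(\PP^2,2\mathcal{Q}+p\mathcal{C}+p\mathcal{C}',D_{2p})\neq\emptyset$ --- is neither proved in the paper nor a standard Sakuma-type formula: those covers have mixed ramification indices $(2,p,p)$ with $p$ odd, while the non-trivial Alexander factor in this situation is $t^2+1$, whose roots have order $4$; the two invariants see different covers and there is no a priori implication in either direction. Moreover, in the paper's own logical order the Alexander statement (the Corollary immediately following this lemma) is \emph{deduced from} the lemma, so arguing in that direction inverts the dependency. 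A self-contained proof should go through $\Pic(\mathcal{Q})[2]$ as above.
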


\begin{cor}
The reduced Alexander polynomial $\tilde{\Delta}_{\mathcal{Q}+\mathcal{C}+\mathcal{C}^\prime}(t)$ is non-trivial if and only if $\mathcal{C}, \mathcal{C}^\prime$ are members of the same family.
\end{cor}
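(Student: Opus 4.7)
The plan is to combine the two immediately preceding results without any additional work, since the corollary is essentially a translation of a geometric statement into the language of Alexander polynomials. First I would apply the proposition at the end of Section \ref{alex} (the case $s=3$, i.e.\ $n=2$) to the three-component curve $\mcB = \mcQ + \mcC + \mcC^\prime$. This proposition asserts that $\tilde\Delta_{\mcB}(t) \neq 1$ if and only if the eight tangency points $\mcQ \cap (\mcC \cup \mcC^\prime) = \{P_{11},\ldots,P_{14},P_{21},\ldots,P_{24}\}$ lie on a common conic.

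Next I would invoke the preceding lemma (Proposition 4.18 of \cite{act}), which provides exactly the geometric characterization required: for $\mcQ$ as in this subsection and $\mcC, \mcC^\prime \in \bigcup_i \mathcal{F}_i$, a conic passing through the eight tangency points exists if and only if $\mcC$ and $\mcC^\prime$ belong to the same family $\mathcal{F}_i$. Chaining the two equivalences yields the corollary.

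Before invoking the proposition I would briefly verify that the configuration $\mcQ + \mcC + \mcC^\prime$ satisfies the hypotheses required in Section \ref{alex}: $\mcQ$ is a union of two transverse conics and so has only $A_1$ singularities; each $\mcC \in \mathcal{F}_i$ is smooth and tangent to $\mcQ$ at four smooth points with intersection multiplicity $2$ by Lemma \ref{llem}; and by Lemma \ref{strong-kNT} the conics $\mcC,\mcC^\prime$ meet each other transversely and avoid triple intersections with $\mcQ$. With these hypotheses in place, the proposition applies directly.

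I do not anticipate any serious obstacle, since both required ingredients are stated and available. The only point worth being careful about is the sanity check on the hypotheses above; once that is in place the argument is a one-line composition of equivalences. The content of the corollary is conceptual rather than computational: it exhibits a case where the Alexander-polynomial invariant $\bAlex_{\mcB}$ already detects the family-membership distinction --- information that will be compared with the finer invariant ${\bf Cov}(\PP^2, \mcB, D_{2p})$ in the ensuing discussion.
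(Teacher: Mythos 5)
Your proposal is correct and is exactly the argument the paper intends: the corollary is stated without proof precisely because it is the composition of the $s=3$ proposition at the end of Section~\ref{alex} (non-triviality of $\tilde{\Delta}$ is equivalent to the existence of a conic through the eight tangency points) with the preceding lemma quoting Proposition 4.18 of \cite{act} (such a conic exists if and only if $\mathcal{C}$ and $\mathcal{C}^\prime$ lie in the same family). Your additional verification of the hypotheses is a reasonable precaution but adds nothing beyond what the paper already assumes.
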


Now by the same argument as above, instead of using ${\bf Cov}(\PP^2,\bullet,D_{2p})$ but by considering ${\bAlex}_{\mcB}$, we see that $\eta$ satisfying ${\bAlex}_{{\mcB^{(k_1,k_2,k_3)}}}={\bAlex}_{{\mcB^{(k_1^\prime,k_2^\prime,k_3^\prime)}}}\circ\eta$ cannot exist because of  the differences of  ${\bAlex}_{{\mcB^{(k_1,k_2,k_3)}}}$ on the subconfigurations of the form $\mcQ+\mcC_i+\mcC_j$. Hence by Corollary \ref{key-cor2}, we have our result.
\bigskip

\subsection{Example 2: A quartic with two nodes and three conics}
 
Let $S$ be the rational elliptic surface corresponding to the Weierstrass equation
\begin{align*}
 y^2={x}^{3}+ \left( 271350-98\,t \right) {x}^{2}+t \left( t-5825 \right) 
 \left( t-2025 \right) x+36\,{t}^{2} \left( t-2025 \right) ^{2}.
 \end{align*}
 This corresponds to considering the construction as in Section \ref{admissible} for the case $\mathcal{Q}: {x}^{3}+ \left( 271350-98\,t \right) {x}^{2}+t \left( t-5825 \right) 
 \left( t-2025 \right) x+36\,{t}^{2} \left( t-2025 \right) ^{2}=0$ and $P=[0:1:0]$.
 $S$ has two singular fibers of type $\I_2$ and one singular fiber of type $\III$. The Mordell-Weil lattice of $S$ is  $\MW(S)\cong D_4^\ast\oplus A_1^\ast$ and
 the narrow Mordell-Weil lattice is  $\MW(S)^0\cong D_4\oplus A_1$. Considering $S$ as an elliptic curve over $\mathbb{C}(t)$, the following rational points $s_1,\cdots, s_5$  form a basis for $\MW(S)$ (\cite{shioda-usui}). 
 \begin{align*}
 s_1=&\left(\frac{1}{25}t(t-2025), \frac{1}{125}(t^2-3950t+3898125)t \right)\\
 s_2=&\left( {\frac {1}{100}}{t}^{2}+{\frac {75}{2}}
t-{\frac {669375}{4}}, -{\frac {1}{1000}}{t}^{3}-{\frac {229}{40}}{
t}^{2}+{\frac {368625}{8}}t-{\frac {431746875}{8}}
 \right)\\
 s_3=&\left( -32t, 2t \left( t-3465 \right) \right)\\
 s_4=&\left( -35t+70875,-\left( t+20475 \right) 
 \left( t-2025 \right)\right)\\
s_5=&\left(0, 6t \left( t-2025 \right) \right)
 \end{align*}
 It can be checked that $\langle s_i,s_5\rangle=0$ ($i=1,\ldots4$), $\langle s_5, s_5\rangle=\frac{1}{2}$, and for $1\leq i,j\leq 4$, 
 \[
 [\langle s_i, s_j\rangle]=\left[ \begin {array}{cccc} 1&1& \frac{1}{2} &\frac{1}{2}\\\noalign{\medskip}1&2&1&1
\\\noalign{\medskip}\frac{1}{2}&1&1&\frac{1}{2}\\\noalign{\medskip}\frac{1}{2}&1&\frac{1}{2}&1
\end {array} \right].
 \]
 A set of  generators $t_1,\ldots,t_5$ for the narrow Mordell-Weil lattice is given by
 \begin{eqnarray*}
 &t_1=2s_1-s_2,\quad 
 t_2=-s_1+2s_2-s_3-s_4,\quad
 t_3=-s_2+2s_3,\\
 &t_4=-s_2+2s_4,\quad
 t_5=2s_5
 \end{eqnarray*}
 with $\langle t_i,t_5\rangle=0$ ($i=1,\ldots4$), $\langle t_5, t_5\rangle=2$ and for $1\leq i,j\leq 4$,
 \[
 [\langle t_i,t_j\rangle]=\left[ \begin {array}{cccc} 2&-1&0&0\\\noalign{\medskip}-1&2&-1&-1
\\\noalign{\medskip}0&-1&2&0\\\noalign{\medskip}0&-1&0&2\end {array}
 \right]. 
 \]
We will  consider the families of conics corresponding to the roots $t_1,t_2, t_3, t_4, t_1+t_2$. Let $\mathcal{F}_{\bullet}$ be the family corresponding to $\bullet=t_1,t_2, t_3, t_4$ or $ t_1+t_2$. By using the explicit method of calculating the equations of the conics given in \cite{bannai-tokunaga}, we obtain the following parametrized equations for the families where $a_1,\ldots, a_5$ are complex parameters.
\begin{itemize}
\item $\mathcal{F}_{t_1}$
\begin{align*}
&\left( {\frac {407}{32}}\,a_{{1}}-{\frac {454825}{256}}-{\frac {1}{
400}}\,{a_{{1}}}^{2} \right) {t}^{2}+ \left( {\frac {1165}{8}}+\frac{1}{10}\,a_{{1}} \right) tx\\
&+ \left( {\frac {914968125}{128}}-{\frac {624675}{16}
}\,a_{{1}}+{\frac {381}{8}}\,{a_{{1}}}^{2} \right) tz-{x}^{2}+ \left( 
-{\frac {4730625}{16}}+{a_{{1}}}^{2} \right) xz\\
&+ \left( {\frac {
846129375}{32}}\,a_{{1}}-{\frac {389025}{16}}\,{a_{{1}}}^{2}-{\frac {
1840331390625}{256}} \right) {z}^{2}=0
\end{align*}
\item $\mathcal{F}_{t_2}$
\begin{align*}
&\left( -{\frac {74102665}{4096}}-{\frac {12611}{768}}\,a_{{2}}-{
\frac {1}{576}}\,{a_{{2}}}^{2} \right) {t}^{2}+ \left( {\frac {5821}{
32}}-\frac{1}{12}\,a_{{2}} \right) tx\\
&+ \left( {\frac {29449125}{128}}\,a_{{2}}
+{\frac {2685}{32}}\,{a_{{2}}}^{2}+{\frac {265613914125}{2048}}
 \right) tz-{x}^{2}+ \left( -{\frac {40259025}{64}}+{a_{{2}}}^{2}
 \right) xz\\&+ \left( -{\frac {145253705625}{256}}\,a_{{2}}-{\frac {
22892625}{64}}\,{a_{{2}}}^{2}-{\frac {921634762190625}{4096}} \right) 
{z}^{2}=0
\end{align*}
\item $\mathcal{F}_{t_3}$
\begin{align*}
&\left( -{\frac {289}{12}}\,a_{{3}}-{\frac {62785}{16}}-\frac{1}{36}\,{a_{{3}}
}^{2} \right) {t}^{2}+ \left( {\frac {361}{2}}+\frac{1}{3}\,a_{{3}} \right) tx\\
&+ \left( {\frac {106713693}{8}}+{\frac {139527}{2}}\,a_{{3}}+{\frac {
165}{2}}\,{a_{{3}}}^{2} \right) tz-{x}^{2}+ \left( -{\frac {1225449}{4
}}+{a_{{3}}}^{2} \right) xz\\
&+ \left( -{\frac {155034243}{4}}\,a_{{3}}-{
\frac {171622907001}{16}}-{\frac {140049}{4}}\,{a_{{3}}}^{2} \right) {
z}^{2}=0
\end{align*}
\item $\mathcal{F}_{t_4}$
\begin{align*}
&\left( {\frac {227}{96}}\,a_{{4}}+{\frac {1275575}{256}}-{\frac {1}{
144}}\,{a_{{4}}}^{2} \right) {t}^{2}+ \left( {\frac {925}{8}}+\frac{1}{6}\,a_{{4}} \right) tx\\&+ \left( -{\frac {1317211875}{128}}+{\frac {182475}{16}
}\,a_{{4}}+{\frac {141}{8}}\,{a_{{4}}}^{2} \right) tz-{x}^{2}+ \left( 
-{\frac {4100625}{16}}+{a_{{4}}}^{2} \right) xz\\&+ \left( -{\frac {
487974375}{32}}\,a_{{4}}+{\frac {240975}{16}}\,{a_{{4}}}^{2}+{\frac {
988148109375}{256}} \right) {z}^{2}=0
\end{align*}
\item $\mathcal{F}_{t_1+t_2}$
\begin{align*}
& \left( -{\frac {6107545}{256}}+{\frac {2501}{32}}\,a_{{5}}-\frac{1}{16}\,{a_{{5}}}^{2} \right) {t}^{2}+ \left( {\frac {2629}{8}}-\frac{1}{2}\,a_{{5}}
 \right) tx\\&+ \left( {\frac {10772038125}{128}}-{\frac {4466025}{16}}\,
a_{{5}}+{\frac {1845}{8}}\,{a_{{5}}}^{2} \right) tz-{x}^{2}+ \left( {a_{{5}}}^{2}-{\frac {6579225}{16}} \right) xz\\&+ \left( {\frac {
5739508125}{32}}\,a_{{5}}-{\frac {2237625}{16}}\,{a_{{5}}}^{2}-{\frac 
{14721838340625}{256}} \right) {z}^{2}=0
\end{align*}
\end{itemize}
By direct computations, using the computer algebra software MAPLE, we can prove the following:

\begin{lem}\label{existanceofconfig}
Let $n$ be any natural number. Then it is possible to choose smooth conics $\mathcal{C}_1,\ldots,\mathcal{C}_n$ so that
\begin{enumerate}
\item $\mathcal{C}_i$ $(i=1,\ldots,n)$ can be chosen from any one of the families $\mathcal{F}_\bullet$ given above,
\item $\mathcal{C}_i$ is tangent to  $\mathcal{Q}$  at $4$ smooth points of $\mathcal{Q}$,
\item $\mathcal{C}_i$ and $\mathcal{C}_j$ intersect transversally at $4$ points,
\item no three of  $\mathcal{Q}, \mathcal{C}_1,\ldots, \mathcal{C}_n$ intersect at a single point.
\end{enumerate}
In particular, for fixed $n$, we can construct infinitely many configurations $\mathcal{Q}+\mathcal{C}_1+\cdots+\mathcal{C}_n$ that have the same combinatorics, with $\mathcal{C}_i$ having prescribed conditions on $\tilde{\psi}(\mcC_i^{\pm})$. 
\end{lem}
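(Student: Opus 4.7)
\medskip
\noindent
\textbf{Proof plan.}
My plan is to proceed by induction on $n$, exploiting that each $\mathcal{F}_\bullet$ is a one-parameter family of conics parametrized by the variable $a_i$ via the explicit equations above, and that the conditions (1)--(4) are all open conditions on the parameters.

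First, I would verify the single-conic properties for a generic member of each family. By Corollary \ref{subscribed}, every $\mathcal{C} \in \mathcal{F}_\bullet$ meets $\mathcal{Q}$ only at smooth points of $\mathcal{Q}$ with even local intersection multiplicity. Since $\mathcal{C} \cdot \mathcal{Q} = 8$, the multiplicities form a partition of $8$ into even parts, and the stratum $(2,2,2,2)$ is open on the parameter line. The locus where $\mathcal{C}(a_i)$ becomes a reducible or double conic is cut out by the discriminant of the explicit quadratic form and is a proper closed subset of $\bbA^1$. Hence a cofinite set of parameter values simultaneously realizes conditions (1) and (2) for any single conic drawn from any $\mathcal{F}_\bullet$.

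Next, I would argue inductively. Assuming $\mathcal{C}_1, \ldots, \mathcal{C}_{k-1}$ have been chosen satisfying (1)--(4), I select $\mathcal{C}_k$ from the prescribed family $\mathcal{F}_{\bullet_k}$. Failures of (3) (non-transversality of $\mathcal{C}_k \cap \mathcal{C}_j$ for some $j<k$) and failures of (4) (a triple intersection among $\{\mathcal{C}_k, \mathcal{C}_i, \mathcal{C}_j\}$ or $\{\mathcal{C}_k, \mathcal{C}_i, \mathcal{Q}\}$) each cut out closed subsets of the parameter line, and only finitely many such conditions need to be checked. If each is a \emph{proper} closed subset, a generic $a_k$ avoids their union and the induction proceeds.

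The main obstacle will be ruling out that any single bad condition exhausts the entire parameter line. Geometrically, this would require either that the family $\mathcal{F}_{\bullet_k}$ has a fixed point coinciding with an obstructing location (for instance a point of $\mathcal{C}_i \cap \mathcal{C}_j$ or of $\mathcal{C}_i \cap \mathcal{Q}$), or that every member of $\mathcal{F}_{\bullet_k}$ is tangent to some previously chosen $\mathcal{C}_j$. This is exactly the step that justifies a direct \textsc{Maple} computation on the explicit parametrized equations for the five families $\mathcal{F}_{t_1}, \mathcal{F}_{t_2}, \mathcal{F}_{t_3}, \mathcal{F}_{t_4}, \mathcal{F}_{t_1+t_2}$: by regarding the left hand sides as polynomials in $a_i$ and eliminating to determine which points of $\PP^2$ satisfy the equations for \emph{all} values of $a_i$, one checks that the fixed locus of each family is a finite subset of $\mathcal{Q}$ that varies with the family. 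The induction hypothesis already imposes that $\mathcal{C}_1,\ldots,\mathcal{C}_{k-1}$ are themselves generic, so their mutual intersection points (and their intersections with $\mathcal{Q}$) avoid the fixed locus of $\mathcal{F}_{\bullet_k}$, and the inductive step completes. Finally, the ``infinitely many configurations'' assertion is immediate because the valid parameters for each $\mathcal{C}_k$ form a Zariski-open subset of $\bbA^1$.
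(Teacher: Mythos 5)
The paper offers no written proof of this lemma beyond the sentence immediately preceding it (``By direct computations, using the computer algebra software MAPLE, we can prove the following''), so your proposal is essentially an articulation of what that computation has to establish: conditions (2)--(4) are Zariski-open conditions on the product of the parameter lines, every potential failure involves at most three members of the configuration, and one only needs to certify that each of the finitely many resulting bad loci is a \emph{proper} closed subset. That reduction, together with the observation that the only serious issue is a bad condition holding identically in the parameter, is correct and is in spirit the same approach the paper takes.

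There is, however, one step that is internally inconsistent as written. You assert that the elimination shows the fixed (base) locus of each family $\mathcal{F}_\bullet$ to be ``a finite subset of $\mathcal{Q}$,'' and you then invoke genericity of $\mathcal{C}_1,\ldots,\mathcal{C}_{k-1}$ so that their intersection points avoid that fixed locus. Genericity cannot avoid a base point: if some $x_0$ lay on \emph{every} member of $\mathcal{F}_{\bullet}$, then any two conics drawn from that same family would meet at $x_0$ for all parameter values, and since $x_0\in\mathcal{Q}$ and each member meets $\mathcal{Q}$ there with even multiplicity, $\mathcal{Q}$, $\mathcal{C}_i$ and $\mathcal{C}_j$ would share the point $x_0$ and the two conics would be mutually tangent there---so conditions (3) and (4) would fail identically. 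This matters precisely for the configurations the paper needs, e.g.\ $\mathcal{B}^1$ takes all three conics from the single family $\mathcal{F}_{t_1}$. What the computation must actually verify is that the base locus of each family is \emph{empty} (for $\mathcal{F}_{t_1}$, that the three conics appearing as the coefficients of $a_1^0$, $a_1^1$, $a_1^2$ in the displayed equation have no common zero in $\PP^2$), and likewise that none of the other bad conditions holds for all parameters. Once the claimed outcome of the computation is corrected in this way, your induction goes through; the flaw is not in the architecture of the argument but in the stated result of the computation on which it relies.
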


Consider the configurations $\mathcal{B}^1,\ldots,\mathcal{B}^4$ where the components are chosen as follows:
\begin{eqnarray*}
\mathcal{B}^1=\mcB^1_1+\mcB^1_2+\mcB^1_3+\mcB^1_4=\mathcal{Q}+\mathcal{C}^1_1+\mathcal{C}^1_2+\mathcal{C}^1_3 & \mathcal{C}^1_2,\mathcal{C}^1_2, \mathcal{C}^1_3\in \mathcal{F}_{t_1}\\
\mathcal{B}^2=\mcB^2_1+\mcB^2_2+\mcB^2_3+\mcB^2_4=\mathcal{Q}+\mathcal{C}^2_1+\mathcal{C}^2_2+\mathcal{C}^2_3 & \mathcal{C}^2_1, \mathcal{C}^2_2\in \mathcal{F}_{t_1}, \mathcal{C}^2_3\in \mathcal{F}_{t_2}\\
\mathcal{B}^3=\mcB^3_1+\mcB^3_2+\mcB^3_3+\mcB^3_4=\mathcal{Q}+\mathcal{C}^3_1+\mathcal{C}^3_2+\mathcal{C}^3_3 & \mathcal{C}^3_1\in\mathcal{F}_{t_1}, \mathcal{C}^3_2\in\mathcal{F}_{t_2}, \mathcal{C}^3_3\in\mathcal{\mathcal{F}}_{t_3}\\
\mathcal{B}^4=\mcB^4_1+\mcB^4_2+\mcB^4_3+\mcB^4_4=\mathcal{Q}+\mathcal{C}^4_1+\mathcal{C}^4_2+\mathcal{C}^4_3 & \mathcal{C}^4_1\in\mathcal{F}_{t_1}, \mathcal{C}^4_2\in\mathcal{F}_{t_2}, \mathcal{C}^4_3\in\mathcal{F}_{t_1+t_2}
\end{eqnarray*}
We will assume that the conics $\mcC^i_j$ in each $\mathcal{B}^i$  satisfies the conditions in Corollary \ref{existanceofconfig}, so $\mathcal{B}^1,\ldots, \mathcal{B}^4$ will have the same combinatorics. In this situation, by Lemma \ref{criterion},  
$\Cov_b(2\mathcal{Q}+p\mathcal{C}^k_i+p\mathcal{C}^k_j,D_{2p})\not=\emptyset$ if and only if $\mathcal{C}^k_i$ and $\mathcal{C}^k_j$ are members of the same family. This is enough to distinguish between $\mathcal{B}^i$ and $\mathcal{B}^{j}$ except for the case where $(i,j)=(3,4)$. To distinguish $\mathcal{B}^3$ and $\mathcal{B}^4$, we further need to consider $\Cov_b(\mathcal{B}^i(2,p,p,p),D_{2p})$. Again, by Lemma \ref{criterion}, since $t_1, t_2, t_3$ are linearly independant in $MW(S_x)$, $\Cov_b(\mathcal{B}^3(2,p,p,p),D_{2p})=\emptyset$, where as, since $t_1, t_2, t_1+t_2$ are linearly dependent, $\Cov_b(\mathcal{B}^4(2,p,p,p),D_{2p})\not=\emptyset$. Summing up, we have the following table which shows the existence/non-existence of $D_{2p}$-covers with ramification data as specified at the heads of the columns. 
\begin{center}
\begin{tabular}{c||c|c|c||c}
& $\mathcal{B}^i(2,p,p,1)$ &  $\mathcal{B}^i(2,p,1,p)$ &  $\mathcal{B}^i(2,1,p,p)$&  $\mathcal{B}^i(2,p,p,p)$\\
\hline
$\mathcal{B}^1$ & $\exists$ & $\exists$ & $\exists$ & $\exists$\\
$\mathcal{B}^2$ & $\exists$ & $\emptyset$ & $\emptyset$ & $\emptyset$\\
$\mathcal{B}^3$ &$\emptyset$ & $\emptyset$ & $\emptyset$ & $\emptyset$\\
$\mathcal{B}^4$ &$\emptyset$ & $\emptyset$ &$\emptyset$ & $\exists$ 
\end{tabular}
\end{center}
We note that a homeomorphism $h:(\PP^2,\mathcal{B}^i)\rightarrow(\PP^2,\mathcal{B}^j)$ must satisfy $h(\mcQ)=\mcQ$. From the table above, it is clear that  ${\bf Cov}(\PP^2,\mathcal{B}^i,D_{2p})\not\underset{\eta}{\approx}{\bf Cov}(\PP^2,\mathcal{B}^j,D_{2p})$ for any $\eta$ induced by a homeomorphism.
Hence by Corollary \ref{key-cor1} we have the following proposition.
\begin{prop}
The configurations $(\PP^2,\mathcal{B}^1),\ldots, (\PP^2,\mathcal{B}^4)$ form a Zarisi 4-plet.
\end{prop}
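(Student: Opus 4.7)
The plan is to apply Corollary \ref{key-cor1}. First I would observe that, by construction and by Lemma \ref{existanceofconfig}, the four configurations share the same combinatorics, so all that remains is to rule out, for each pair $i\ne j$, the existence of a homeomorphism $h:(\PP^2,\mathcal{B}^i)\to(\PP^2,\mathcal{B}^j)$. Such an $h$ would produce an inclusion-preserving bijection $\eta:{\rm Sub}(\mathcal{B}^i)\to{\rm Sub}(\mathcal{B}^j)$ compatible with ${\bf Cov}(\PP^2,\bullet,D_{2p})$. Because $\mathcal{Q}$ is the unique quartic component in each configuration, any such $\eta$ must send $\mathcal{Q}$ to $\mathcal{Q}$ and permute the three conics among themselves while preserving the ramification data.

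The strategy is then to extract from ${\bf Cov}(\PP^2,\mathcal{B}^k,D_{2p})$ two numerical invariants that together separate the four $\mathcal{B}^k$'s. The first is the number of unordered pairs $(\mathcal{C}^k_a,\mathcal{C}^k_b)$ for which $\Cov_b(\PP^2,2\mathcal{Q}+p\mathcal{C}^k_a+p\mathcal{C}^k_b,D_{2p})\ne\emptyset$. By Lemma \ref{criterion} and Corollary \ref{subscribed}, such a pair contributes precisely when the two conics lie in the same family $\mathcal{F}_\bullet$, so these counts are $3,1,0,0$ for $\mathcal{B}^1,\mathcal{B}^2,\mathcal{B}^3,\mathcal{B}^4$ respectively. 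This already separates $\mathcal{B}^1$ and $\mathcal{B}^2$ from each other and from $\{\mathcal{B}^3,\mathcal{B}^4\}$.

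The remaining and most delicate step is to separate $\mathcal{B}^3$ from $\mathcal{B}^4$, where the pair-invariant is uninformative. Here I would turn to the triple-invariant $\Cov_b(\PP^2,2\mathcal{Q}+p(\mathcal{C}^k_1+\mathcal{C}^k_2+\mathcal{C}^k_3),D_{2p})$. By Lemma \ref{criterion}, this set is non-empty iff there exist $1\le\alpha_i<p$ with $\sum_i\alpha_i\tilde{\psi}(\mathcal{C}^{k,+}_i)\in p\MW(S_P)$. For $\mathcal{B}^3$ the classes $\pm t_1,\pm t_2,\pm t_3$ are $\ZZ$-linearly independent (since $t_1,t_2,t_3$ are part of a basis of $\MW(S_P)^0$), forcing $\alpha_i\equiv 0\pmod p$, which contradicts $1\le\alpha_i<p$. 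For $\mathcal{B}^4$ the relation $t_1+t_2=(t_1+t_2)$, combined with the sign flexibility afforded by the two sheets $\mathcal{C}^+$ and $\mathcal{C}^-$, yields $\alpha_1=\alpha_2=\alpha_3=1$ as a valid witness after choosing $\tilde{\psi}(\mathcal{C}^{4,+}_3)=-(t_1+t_2)$. Thus the triple-invariant distinguishes $\mathcal{B}^3$ from $\mathcal{B}^4$, and Corollary \ref{key-cor1} delivers the Zariski $4$-plet.

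The main obstacle, as the argument reveals, is organizational rather than computational: no single column of the existence table preceding the statement separates all four configurations, so one must combine two genuinely distinct pieces of the invariant ${\bf Cov}(\PP^2,\mathcal{B}^k,D_{2p})$ to conclude. Once that combination is identified, each individual existence or non-existence claim reduces to checking whether a prescribed $\ZZ$-combination of the narrow Mordell--Weil generators $t_1,\ldots,t_5$ lies in $p\MW(S_P)$, which is immediate from the explicit Gram matrix and the fact that the relevant $t_i$ belong to a basis.
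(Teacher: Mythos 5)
Your proposal is correct and follows essentially the same route as the paper: it distinguishes $\mathcal{B}^1,\mathcal{B}^2$ from the rest via the pairwise data $\Cov_b(\PP^2,2\mathcal{Q}+p\mathcal{C}_a+p\mathcal{C}_b,D_{2p})$ and separates $\mathcal{B}^3$ from $\mathcal{B}^4$ via $\Cov_b(\mathcal{B}^i(2,p,p,p),D_{2p})$ using the linear (in)dependence of $t_1,t_2,t_3$ versus $t_1,t_2,t_1+t_2$, exactly as in the paper's existence table, before invoking Corollary~\ref{key-cor1}. The only cosmetic difference is that you package the pairwise information as a count of non-empty entries rather than as the full table, which is equivalent once one notes (as you and the paper both do) that any homeomorphism must send $\mathcal{Q}$ to $\mathcal{Q}$.
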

We emphasize again that the four arrangements are not  distinguishable if we look only at  $\Cov_b(\PP^2,\mathcal{B}^i(2,p,p,p),D_{2p})$, an invariant for the whole configuration. We were only able to distinguish them by considering the subconfigurations of $\mathcal{B}^i$ and  ${\bf Cov}(\PP^2,\mathcal{B}^i,D_{2p})$.


Now we will consider $\bAlex(\mathcal{B}^i)$. By direct calculations we can prove the following:
\begin{lem}
It is possible to choose conics $\mathcal{C}^i_j$, $j=1,2,3$ for $\mathcal{B}^i$ so that there exists a conic passing through the eight points of tangency of $\mathcal{Q}$ and $\mathcal{C}^i_k+\mathcal{C}^i_l$ if and only if $\mathcal{C}^i_k$ and $\mathcal{C}^i_l$ are from the same family.
\end{lem}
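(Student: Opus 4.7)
The plan is to verify this via the explicit parametrizations of the families $\mathcal{F}_{t_1}, \mathcal{F}_{t_2}, \mathcal{F}_{t_3}, \mathcal{F}_{t_4}, \mathcal{F}_{t_1+t_2}$ displayed above, combined with the criterion from the final proposition of Section~2: for $\mcB = \mcQ + \mcC + \mcC'$, the reduced Alexander polynomial $\tilde{\Delta}_{\mcB}(t)$ is non-trivial if and only if a conic passes through the eight tangency points of $\mcQ$ with $\mcC + \mcC'$.

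The ``same family'' direction is essentially structural. Suppose $\mcC, \mcC'$ are two distinct members of $\mathcal{F}_{t_i}$. By construction $\tilde{\psi}_P(\mcC^+) = t_i = \tilde{\psi}_P((\mcC')^+)$. Taking $a_1 = 1$, $a_2 = p-1$ with $p \geq 3$, the lattice identity $a_1 \tilde{\psi}_P(\mcC^+) + a_2 \tilde{\psi}_P((\mcC')^+) = p\,t_i \in p\MW(S_P)$ together with Lemma~\ref{criterion} produces a $D_{2p}$-cover of $\PP^2$ branched at $2\mcQ + p(\mcC + \mcC')$. The existence of such a dihedral cover forces the reduced Alexander polynomial $\tilde{\Delta}_{\mcQ + \mcC + \mcC'}(t)$ to contain a non-trivial cyclotomic factor, hence by the cited proposition a conic through the eight tangency points exists. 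This applies to every choice of $\mcC, \mcC' \in \mathcal{F}_{t_i}$.

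The ``different families'' direction requires a genericity argument. For each unordered pair of distinct families occurring among $\mcB^1,\ldots,\mcB^4$, namely $(\mathcal{F}_{t_1},\mathcal{F}_{t_2})$, $(\mathcal{F}_{t_1},\mathcal{F}_{t_3})$, $(\mathcal{F}_{t_2},\mathcal{F}_{t_3})$, $(\mathcal{F}_{t_1},\mathcal{F}_{t_1+t_2})$, and $(\mathcal{F}_{t_2},\mathcal{F}_{t_1+t_2})$, I would pick a single numerical specialization of the parameters $(a_i, a_j)$, compute symbolically in MAPLE the eight tangency points of the resulting pair of conics with $\mcQ$ (via the resultant of the conic and $\mcQ$ along the pencil direction), and verify that the $8 \times 6$ evaluation matrix with rows $(x_k^2, x_k y_k, y_k^2, x_k z_k, y_k z_k, z_k^2)$ has full rank $6$. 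The condition ``the eight points admit a common conic'' is Zariski-closed on the relevant parameter space, so a single specialization achieving rank $6$ shows the failure locus is a non-empty Zariski-open subset.

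Assembling: within the parameter space of configurations given by Lemma~\ref{existanceofconfig}, the three conditions --- combinatorial genericity (transversality, no triple points), common-conic existence for same-family pairs (which holds identically), and common-conic non-existence for different-family pairs (which holds on a non-empty open set) --- simultaneously define a non-empty Zariski-open subset. Any choice of conics from this set yields configurations $\mcB^i$ satisfying the lemma. The main obstacle is the computational heaviness of the rank verification for the five pairs of distinct families, which is tedious but routine with computer algebra given the unwieldy explicit equations; the conceptual core is the bridge provided by Lemma~\ref{criterion} together with the Alexander polynomial criterion, which reduces the ``same family'' direction to the lattice identity $t_i + (p-1)t_i = p\,t_i \in p\MW(S_P)$ and leaves only the generic non-existence to be checked computationally.
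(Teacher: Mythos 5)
Your handling of the different-families direction---specialize the parameters, compute the eight tangency points, and check that the $8\times 6$ evaluation matrix in the conic coefficients has rank $6$, then spread this to a Zariski-open set---is in the same spirit as the paper, whose entire proof of this lemma is the phrase ``by direct calculations'' with MAPLE on the displayed explicit equations. That half is acceptable, modulo actually carrying out the computation for the five relevant pairs of families.

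The same-family direction, however, contains a genuine gap. You argue: same family $\Rightarrow$ a $D_{2p}$-cover branched at $2\mathcal{Q}+p(\mathcal{C}+\mathcal{C}')$ exists by Lemma~\ref{criterion} $\Rightarrow$ $\tilde{\Delta}_{\mathcal{Q}+\mathcal{C}+\mathcal{C}'}(t)\neq 1$ $\Rightarrow$ the conic through the eight points exists. The middle implication is asserted without justification and is false as a general principle in exactly this setting. The $D_{2p}$-cover corresponds to a surjection of $\pi_1(\PP^2\setminus\mathcal{B})$ onto $D_{2p}$ sending the meridian of $\mathcal{Q}$ to an involution and the meridians of the conics to $p$-torsion elements; this surjection does not factor through the infinite cyclic cover attached to the total-degree homomorphism, so it carries no a priori information about the Alexander polynomial used here. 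The paper's own data refutes the implication: for $\mathcal{B}^4$ one has $\Cov_b(\mathcal{B}^4(2,p,p,p),D_{2p})\neq\emptyset$ while $\tilde{\Delta}_{\mathcal{B}^4}(t)=1$, and indeed the stated conclusion of the whole section is that ${\bf Cov}$ is \emph{strictly finer} than $\bAlex$, which is incompatible with deducing nontriviality of $\bAlex$ from nonemptiness of $\Cov_b$. Note also that the factor to be produced is $t^2+1$, whose roots are primitive fourth roots of unity, whereas a $D_{2p}$-cover with $p$ an odd prime could at best be expected to detect $p$-th or $2p$-th roots of unity. So the same-family direction needs its own argument: either the direct computation with the explicit parametrizations that the paper invokes, or a geometric argument in the style of \cite[Proposition 4.18]{act}, using that $\mathcal{C}^{+}$ and $(\mathcal{C}')^{+}$ have the same image under $\tilde{\psi}$ and hence differ by an element of the trivial lattice $T_{\varphi}$---it is this linear equivalence on $S_P$, not the dihedral cover, that actually produces the conic through the eight tangency points.
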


We note that the statement is true for all $\mathcal{C}^i_j$  but we only need this special case in the following argument.

\begin{cor}
Let $\mathcal{C}^i_j$ satisfy the conditions of the above lemma. Then
$\tilde{\Delta}_{\mathcal{Q}+\mathcal{C}^i_k+\mathcal{C}^i_l}(t)=t^2+1$  if and only if $\mathcal{C}^i_k$ and $\mathcal{C}^i_l$ are of the same family. Otherwise, $\tilde{\Delta}_{\mathcal{Q}+\mathcal{C}^i_k+\mathcal{C}^i_l}(t)=1$.
\end{cor}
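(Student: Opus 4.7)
\medskip
\noindent\textbf{Proof plan.}
My first move is to dispatch the ``otherwise'' case directly: if $\mcC^i_k$ and $\mcC^i_l$ lie in different families, then by the Lemma just above no conic passes through the eight tangency points $\mcQ\cap(\mcC^i_k\cup\mcC^i_l)$, and the preceding Proposition then gives $\tilde{\Delta}_{\mcQ+\mcC^i_k+\mcC^i_l}(t)=1$. The real content of the Corollary is therefore to show that in the same-family case, where the Lemma supplies a conic $\mcC_0$ through the eight tangency points, the reduced Alexander polynomial equals $t^2+1$.

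\smallskip
To this end I would apply the Loeser--Vaqui\'e formula (Theorem \ref{Loeser-Vaquie}) to $\mcB=\mcQ+\mcC^i_k+\mcC^i_l$, recognising this as the $n=2$ instance of the setup introduced before Lemma \ref{2n+3}, so $d=2n+4=8$. By Lemma \ref{egideal} the $A_1$ singularities contribute nothing, and the $A_3$ tangencies contribute only in the range $\lceil\tfrac{3}{2}(n+2)\rceil=6\le k\le 2n+3=7$. Lemma \ref{2n+3} applied with $n=2$ already gives $\ell_7=0$, so everything reduces to computing $\ell_6$. Here $\rho(6)=\sum_{P}\dim V_6(P)-\dim O(3)=8-10=-2$, and $\ker\bar\sigma_6$ is the vector space of forms of degree $\le 3$ vanishing at the eight tangency points.

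\smallskip
The crucial step is a Bezout argument to show $\dim\ker\bar\sigma_6=3$. Any cubic $F$ through the eight tangency points meets $\mcC_0$ in at least $8$ points, but $\deg F\cdot \deg\mcC_0=6$, so $\mcC_0$ must be a component of $F$; hence $F=\mcC_0\cdot L$ for some linear form $L$, and conversely every such product lies in $\ker\bar\sigma_6$. This identifies $\ker\bar\sigma_6$ with the $3$-dimensional space of linear forms, so $\ell_6=-2+3=1$. Since $\Delta_6(t)=(t-e^{3\pi i/2})(t-e^{-3\pi i/2})=t^2+1$, Theorem \ref{Loeser-Vaquie} then yields $\tilde{\Delta}_{\mcB}(t)=t^2+1$, as required. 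The one subtlety, which I expect to be the main obstacle, is the degenerate situation in which $\mcC_0$ splits into two distinct lines $L_1+L_2$: a short combinatorial argument using that each $L_i$ meets $\mcC^i_k$ and $\mcC^i_l$ in at most two points each forces the tangency points to partition as $4+4$ between $L_1$ and $L_2$, after which Bezout applied first to $L_1$ and then to the residual conic against $L_2$ still yields $F=L_1 L_2 L_3=\mcC_0\cdot L_3$, so the identification of $\ker\bar\sigma_6$ is unchanged.
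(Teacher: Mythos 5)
Your proof is correct, and it supplies more than the paper itself does: the paper derives this corollary by combining the preceding lemma (a conic through the eight tangency points exists iff the two conics are in the same family) with the earlier Proposition for $\mcB=\mcQ+\mcC_1+\mcC_2$, which is only stated in the form ``$\Delta_{\mcB}\neq 1$ iff such a conic exists'' and whose proof is outsourced to \cite{act}. That chain of citations literally yields only nontriviality, not the exact value $t^2+1$; your Loeser--Vaqui\'e computation closes that gap. The numerics check out: for $n=2$, $d=8$, Lemma~\ref{egideal} kills all $A_1$ points and restricts the $A_3$ contribution to $k=6,7$; Lemma~\ref{2n+3} with $n=2$ gives $\ell_7=0$; and for $k=6$ one has $\rho(6)=8-10=-2$ while $\ker\bar\sigma_6$ is the space of (affine) cubics through the eight tangency points, which your Bezout argument identifies with $\mcC_0\cdot O(1)$, so $\ell_6=1$ and $\Delta_6(t)=t^2+1$. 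Your handling of a possibly reducible $\mcC_0$ is also sound (a double line is excluded since a line meets a smooth conic in at most two points, so at most $4<8$ of the tangency points could lie on it), and the ``otherwise'' direction via the stated Proposition is exactly what the paper intends. In short: same machinery as the paper's Section~2, but you make explicit the one quantitative step ($\ell_6=1$) that the paper's own statements do not.
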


The following table gives $\tilde{\Delta}_{\mcB_I^i}(t)$.

\begin{center}
\begin{tabular}{c||c|c|c||c}
& $\tilde{\Delta}_{\mathcal{Q}+\mathcal{C}_{i,1}+\mathcal{C}_{i,2}}(t)$ &  $\tilde{\Delta}_{\mathcal{Q}+\mathcal{C}_{i,1}+\mathcal{C}_{i,3}}(t)$ &  $\tilde{\Delta}_{\mathcal{Q}+\mathcal{C}_{i,2}+\mathcal{C}_{i,3}}(t)$ & $\tilde{\Delta}_{\mathcal{Q}+\mathcal{C}_{i,1}+\mathcal{C}_{i,2}+\mathcal{C}_{i,3}}(t)$\\
\hline
$\mathcal{B}^1$ & $t^2+1$ & $t^2+1$ & $t^2+1$ & $1$\\
$\mathcal{B}^2$ & $t^2+1$ & $1$ & $1$ & $1$\\
$\mathcal{B}^3$ &$1$ & $1$ & $1$ & $1$\\
$\mathcal{B}^4$ &$1$ & $1$ &$1$ & $1$ 
\end{tabular}
\end{center}

In this case, we can distinguish $\mcB^i$ and $\mcB^j$ by Corollary \ref{key-cor2} for $(i,j)\not=(3,4)$ as $\bAlex_{\mcB^i}\not=\bAlex_{\mcB^j}\circ\eta$ for any $\eta$ induced by a homeomorphism for $(i,j)\not=(3,4)$, but we cannot distinguish $\mcB^3$ and $\mcB^4$.
As a conclusion, we can say that ${\bf Cov}(\PP^2,\mcB,G)$ is a finer invariant compared to  $\bAlex_{\mcB}$.

As a final remark, we note that $\mathcal{B}^3$ and $\mathcal{B}^4$ can be distinguished by  an elementary  geometric condition. There exists a cubic passing through the 12 points of tangency of $\mathcal{Q}$ and $\mathcal{C}^3_1+\mathcal{C}^3_2+\mathcal{C}^3_3$ where as no such cubic exists for $\mathcal{B}^4$.

\noindent Shinzo BANNAI and  Hiro-o TOKUNAGA\\
Department of Mathematics and Information Sciences\\
Graduate School of Science and Engineering,\\
Tokyo Metropolitan University\\
1-1 Minami-Ohsawa, Hachiohji 192-0397 JAPAN \\
{\tt shinzo.bannai@gmail.com}, {\tt tokunaga@tmu.ac.jp}

\vspace{0.5cm}
      
\noindent Masayuki KAWASHIMA\\
      Department of Mathematics,\\
         Tokyo University of Science,\\
         1-3 Kagurazaka, Shinjuku-ku, 
         Tokyo 162-8601 JAPAN\\
{\tt kawashima@ma.kagu.tus.ac.jp}
 \end{document}